\documentclass[12pt]{amsart}
\usepackage[a4paper,margin=1in]{geometry}
\usepackage{amsmath,amsthm,amscd,amsfonts,amssymb,amsbsy,enumerate}
\usepackage{setspace}
\usepackage{booktabs}

\usepackage{mathrsfs} 
\usepackage{hyperref}
\usepackage{url}
\usepackage{wrapfig}
\usepackage{caption}
\usepackage{subcaption}
\usepackage{mathtools}
\usepackage{xcolor}
\usepackage{graphicx}

\newtheorem{theorem}{Theorem}[section]
\newtheorem{definition}[theorem]{Definition}
\newtheorem{remark}[theorem]{Remark}
\newtheorem{corollary}[theorem]{Corollary}

\newcommand{\N}{\mathbb{N}}
\newcommand{\R}{\mathbb{R}}

\newcommand{\cO}{\mathcal{O}}
\newcommand{\cS}{\mathcal{S}}

\newcommand{\loc}{\text{loc}}

\newcommand{\be}{\begin{equation}}
\newcommand{\ee}{\end{equation}}

\newcommand{\abs}[1]{{\left| #1 \right|}}

\newcommand{\inn}[2]{{\langle #1,#2 \rangle}}

\DeclareMathOperator{\supp}{supp}

\begin{document}

\title{Fourier Representations of Fractional B-Splines via Generalized Stirling-type Polynomials}

\author{Damla G\"un}
\address{Department of Mathematics, Akdeniz University, Antalya, T\"urkiye}
\email{damlagun@akdeniz.edu.tr}
\author{Peter Massopust}
\address{Department of Mathematics, Technical University of Munich, Garching b. Munich, Germany}
\email{massopust@ma.tum.de}
\author{Yilmaz Simsek}
\address{Department of Mathematics, Akdeniz University, Antalya, T\"urkiye}
\email{ysimsek@akdeniz.edu.tr}

\subjclass[2020]{05A15, 11B68, 11B73, 12D10, 26A33, 41A15, 46F12}
\keywords{Fractional B-spline; Lizorkin space; fractional operators; Array polynomials; Stirling-type polynomials; combinatorial structures}

\date{}
	
\begin{abstract}
In this paper, using generating functions methods, we investigate fractional B-splines and their connections with Fourier analysis, and establish connections with generalized Array polynomials in terms of the Stirling-type numbers and distribution theory. Employing a generating-function approach inspired by recent results of Simsek \cite{Simsek2013}, we derive some novel formulas and relations involving Fourier-type expansion for fractional B-splines with the aid of these polynomials and numbers. Other main contribution of this paper is to provide a new type Fourier expansion of fractional B-splines by using these numbers and polynomials. This  expansion show that fractional B-splines as infinite linear combinations of derivatives of the Dirac delta on the distributional sense. Furthermore, we establish an explicit shifted distributional representation, which characterize the action of fractional B-splines with test functions. Finally, by using the Mittag–Leffler function, we construct new generating function for a new class of fractional spline polynomials. The results of this paper provide many applications with connects spline theory, fractional calculus, combinatorial structures and special functions. 
\end{abstract}		

\maketitle

\section{Introduction}
Spline functions play a central role in approximation theory, numerical analysis, and signal processing. In particular, polynomial B-splines, introduced by Schoenberg, provide a powerful and flexible basis for representing piecewise polynomial functions with prescribed smoothness. These functions possess remarkable properties such as compact support, stability, and efficient recursive constructions, which make them fundamental tools in both theory and applications.

A key feature of B-splines is their Fourier representation, which reveals a deep connection between spline theory and harmonic analysis. In recent years, this classical framework has been extended to fractional and complex orders, leading to the notion of complex B-splines. These generalized splines form a continuous family indexed by a complex parameter, allowing a refined control of smoothness through the real part and phase information through the imaginary part. Moreover, complex B-splines can be characterized as solutions of fractional differential equations with discrete Dirac-type sources, linking spline theory with fractional calculus and distribution theory.

As for the theory of generating functions have many important applications in various applied sciences. Especially, generating functions for spline theory has been rarely studied by researchers. The motivation of this paper is to use generating function method with their functional equations to derive many novel relations, formulas and series representations. Our results are related to fractional B-spline, Lizorkin space, fractional operators, Array polynomials, Stirling-type polynomials, combinatorial structures etc. Therefore, we establish novel connections between fractional B-splines, Fourier analysis, and Array polynomials and generalized Stirling-type numbers. We also give not only Fourier-type expansion of fractional B-splines in terms of generalized combinatorial coefficients, but also fractional B-splines as infinite linear combinations of derivatives of the Dirac delta on distributions. As an additional contribution, we construct a new class of fractional spline polynomials and establish their generating function, revealing a connection with Mittag--Leffler-type structures.
\section{Brief Review of Spline Theory}
In this section, we briefly recall the basic notions and some results on spline functions that will be used throughout the paper.

\begin{definition}\label{Def2.1}
Let $X = \{x_0 < x_1 < \cdots < x_{k+1}\} \subset \mathbb{R}$ be a knot set. 
A polynomial spline of order $n$ is a function $s:[x_0,x_{k+1}]\to\mathbb{R}$ such that
\begin{enumerate}
\item[(i)] $s$ is a polynomial of degree at most $n-1$ on each interval $[x_{i-1},x_i]$ and
\item[(ii)] $s \in C^{n-2}[x_0,x_{k+1}]$.
\end{enumerate}
If the knot set is uniform, i.e., $X \subset \mathbb{Z}$, then $s$ is called a {cardinal polynomial spline}.
\end{definition}
For $n=1$, $C^{-1}[x_0,x_{k+1}]$ is defined as the space of piecewise constant functions on $[x_0, x_{k+1}]$. Note that condition (ii) in Definition \ref{Def2.1} is equivalent to requiring that $s\vert_{(x_{i-1},x_i)}$ be a solution of the linear diﬀerential equation $D^n y = 0$, where $D$ denotes the ordinary diﬀerential operator on functions.

For a fixed knot set $X$, polynomial splines of order $n$ form a real vector space of dimension $n+k$. A convenient basis for this vector space is given by Schoenberg's polynomial B-splines.
\begin{definition}
Let $\chi$ denote the characteristic or indicator function of the unit interval $[0,1]$. For $1<n\in\N$, set
\begin{align}
&B_1 :[0,1] \to [0,1], \quad x\mapsto \chi(x), \nonumber\\
&B_n (x) := (B_{n-1} * B_1)(x) = \int_0^1 B_{n-1}(x-t)dt.
\end{align}
An element of the family $\{B_n : n\in\N\}$ is called a cardinal polynomial B-spline of order $n$.
\end{definition}
\begin{remark}
As we deal exclusively with cardinal splines in this paper, we will drop the adjective ``cardinal" from now on.
\end{remark}

Polynomial B-splines admit an explicit representation of the form
\begin{equation}
B_n(x)=\frac{1}{\Gamma(n)}\sum_{k=0}^{n}(-1)^k\binom{n}{k}(x-k)_+^{n-1},
\label{Bspline}
\end{equation}
where
\begin{equation}
\binom{n}{k}=	\frac{\Gamma(n+1)}
{\Gamma(k+1)\Gamma(n-k+1)} \label{gamma}
\end{equation} 
and
\[
x_+^n :=\begin{cases}
x^n, & x>0;\\
0, & x\le 0.
\end{cases}
\]
It can be shown that every spline $s:[x_0,x_{k+1}]\to\mathbb{R}$ of order $n$ is a linear combination of shifted B-splines:
\[
s(x) = \sum_{j=-n+1}^k c_j B_n(x-j),
\]
for certain $c_j\in\R$. This result implies that the investigation of properties of splines can be reduced to the investigation of these properties for the basis B-splines. The graphs of some B-splines are shown in Figure \ref{Fig1}.
\begin{figure}[h!]
    \centering
    \includegraphics[width=8cm,height= 3cm]{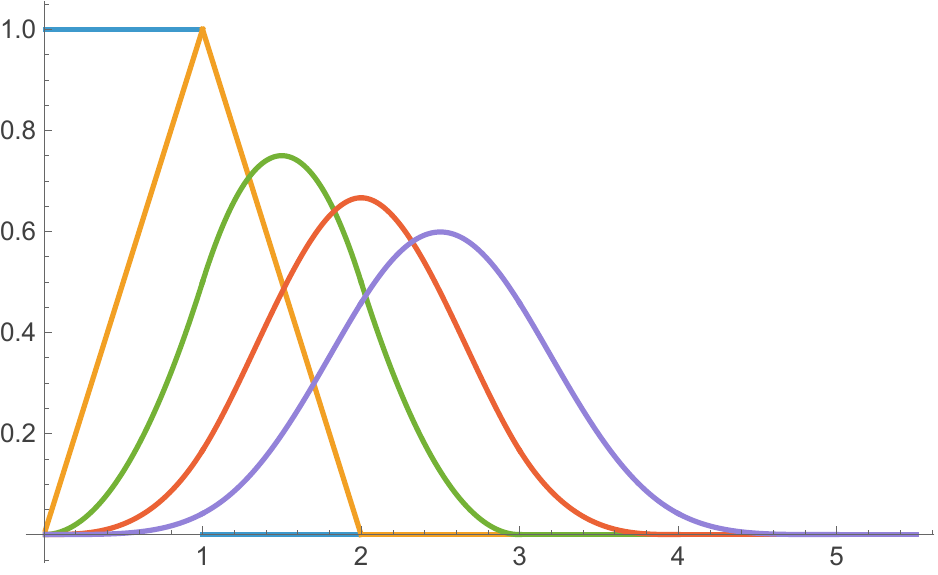}
    \caption{B-splines of orders 1 to 5.}
    \label{Fig1}
\end{figure}

Polynomial B-splines enjoy, among others, the following properties:
\begin{enumerate}
\item $\forall\,n\in\N$: $B_n\in C^{n-2}$ and $\supp B_n = [0,n]$;
\item Recursion relation: $\forall\,1<n\in\N\;\forall\,x\in\R$: $B_n(x) = \frac{x}{n-1} B_{n-1}(x) + \frac{n-x}{n-1} B_{n-1}(x-1)$;
\item Given $f \in C^n[a,b]$, the error of approximating $f$ by polynomial B-splines of order $n$ on a uniform grid of mesh size $h$ is $\mathcal{O}(h^n)$.
\end{enumerate}
For more details and further properties of B-splines, we refer to \cite{deBoor1978} of \cite{Massopust2012}.

However, Schoenberg’s polynomial B-splines have two drawbacks:
\begin{enumerate}
\item $\{B_n : n\in\N\}$ forms a \emph{discrete} family of functions of increasing smoothness, but there exist functions which belong to spaces of non-integral smoothness, for instance, the H\"older spaces $C^{n+\alpha}$, $n\in\N_0$, $0 < \alpha \leq 1$. Such functions cannot be well approximated by B-splines of integral order $n$ since the approximation error depends only on $n$ and not on $n + \alpha$.
\item The family $\{B_n\}$ interpolates/approximates primarily point values but cannot
be used to obtain phase or directional information.
\end{enumerate}
In order to overcome these drawbacks, more general B-splines were recently defined. 
Fractional B-splines, i.e., splines of real order were constructed in \cite{UnserBlu2000} to produce a continuous family of functions, with respect to smoothness, for H\"older spaces, and B-splines of complex order (cf. \cite{ForsterUnserBlu2006}), which also included phase information through the imaginary part of the complex order.

The construction of fractional and complex B-splines is based on the Fourier transform of $B_n$ which is given by
\begin{equation}
\widehat{B_n}(\omega) := \mathcal{F}\{B_n\}(\omega) := \int_\R B_n(x) e^{- i x \omega} dx
=\left(\frac{1-e^{-i\omega}}{i\omega}\right)^n.
\label{FourierBn}
\end{equation}
Since in this paper our focus lies on fractional B-splines, we present only their definition and properties. The interested reader is referred to \cite{UnserBlu2000} for more details and proofs.
\begin{definition}
Let $\alpha > 1$. The fractional B-spline of order $\alpha$ is defined in the Fourier domain by
\begin{equation}
\widehat{B_\alpha}(\omega) := \left(\frac{1-e^{-i\omega}}{i\omega}\right)^\alpha.
\label{FourierFractional}
\end{equation}
\end{definition}
In the time domain, fractional B-splines admit the representation
\begin{equation}
B_\alpha(x)=\frac{1}{\Gamma(\alpha)}\sum_{k=0}^{\infty}(-1)^k\binom{\alpha}{k}(x-k)_+^{\alpha-1}.
\label{TimeFractional}
\end{equation}
For the sake of completeness, we list some properties of fractional B-splines (cf.\cite{UnserBlu2000}).
\begin{enumerate}
    \item For $\alpha > 1$, $B_\alpha\in L^1(\R)\cap L^2(\R)$.
    \item $B_\alpha = \cO (\abs{x}^{-m})$ for $m < \alpha + 1$ as $\abs{x}\to\infty$.
    \item $B_\alpha$ reproduces polynomials up to order $\lceil\alpha\rceil$.
    \item For $\alpha > 1$, $B_\alpha$ is $(\alpha-1)$--H\"older continuous.
\end{enumerate}

In Figure \ref{Fig2}, the graphs of some fractional B-splines are depicted.
\begin{figure}[h!]
    \centering
    \includegraphics[width=8cm, height=3.5cm]{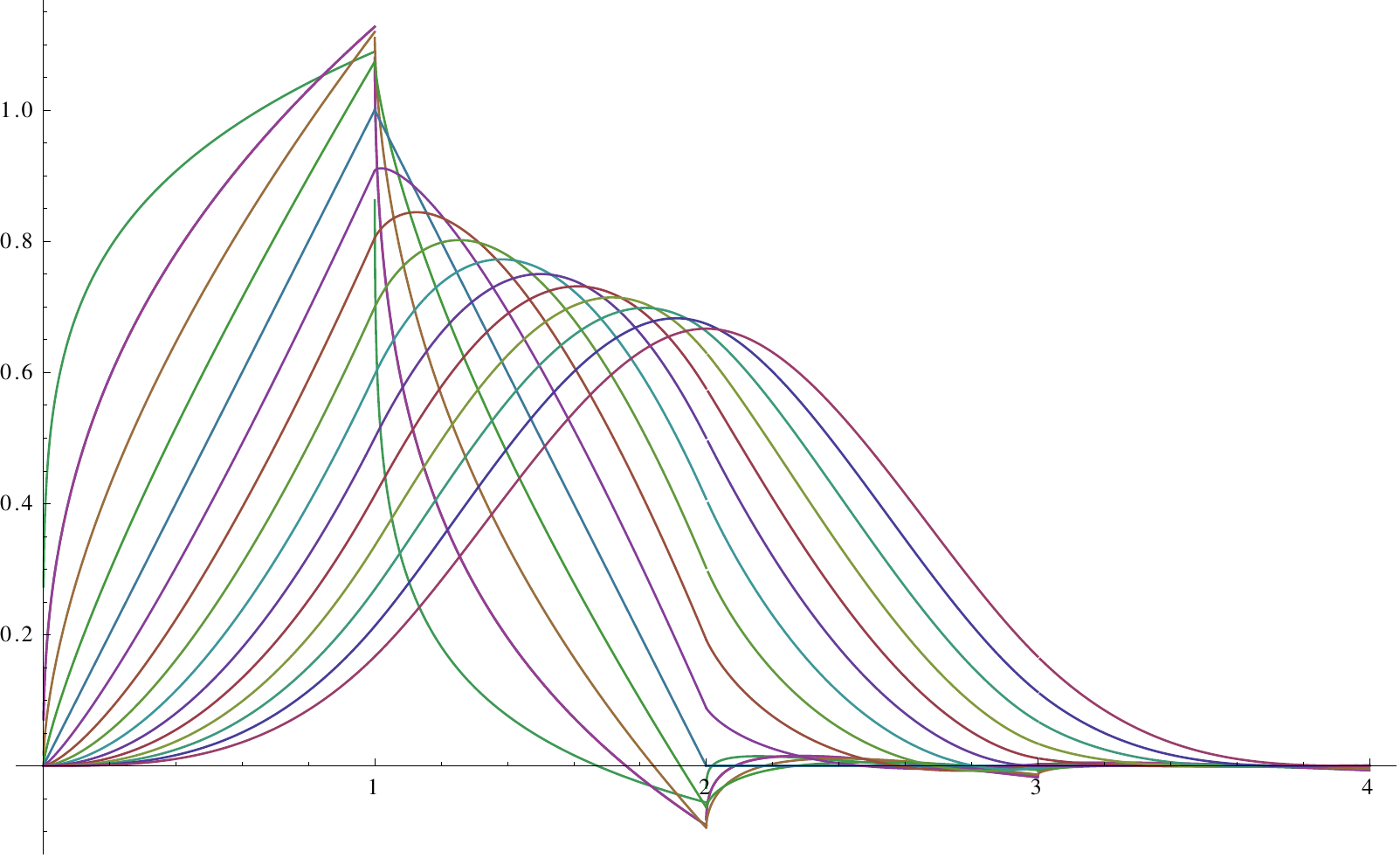}
    \caption{A family of fractional B-splines for $\alpha = 1 + m\cdot 0.25$, $m = 1, \ldots, 12$.}
    \label{Fig2}
\end{figure}

The functions $B_\alpha$ satisfy fractional distributional differential equations of the form
\begin{equation}
D^\alpha f=\sum_{k=0}^{\infty} a_k \delta(x-k),
\end{equation}
which motivates the definition of splines of fractional order (cf. \cite{Massopust2022}). A brief review of fractional operators in the subject of the next section.


%
\section{Some Results About Fractional Operators}
In this short section, we introduce those fractional operators that are relevant for the remainder of this paper. To this end, denote by $\mathcal{S}(\mathbb{R})$ the Schwartz space of rapidly decreasing functions on $\mathbb{R}$:
\[
\mathcal{S}(\mathbb{R}):=\left\{
f \in C^\infty(\mathbb{R}) :
\forall m,n \in \mathbb{N}_0,\;
\sup_{x \in \mathbb{R}} |x^m (D^n f)(x)| < \infty
\right\}.
\]
Recall that $\cS(\R) \subset L^p(\R)$, for all $1\leq p\leq \infty$. The Lizorkin space $\Psi$ is defined by
\[
\Psi := \Psi(\R) :=\{\psi\in\mathcal{S}(\mathbb{R}): D^m\psi(0)=0,\ \forall m\in\N_0\}
\]
and its restriction to the nonnegative real axis by
\[
\Psi_+ := \{f\in \Psi : \supp f \subseteq [0,\infty)\}.
\]
For $\alpha > 0$, define a kernel function by
\be\label{Kalpha}
K_\alpha : \mathbb{R} \to \mathbb{\R}, \qquad
K_\alpha(x) := \frac{x_+^{\,\alpha-1}}{\Gamma(\alpha)}.
\ee
To proceed, we require some definitions and results from the theory of distributions and fractional operators. For a review of distribution theory, we refer to \cite{GelfandShilov1959,Schwartz1966, Zemanian1987} and for fractional derivative and integral operators to \cite{Podlubny1999,Samko1993}.

We note that for $\alpha >0$, $K_\alpha\in L^1_{\loc}$, and we can regard it as a regular distribution in the space $\Psi_+'$.

\begin{definition}[\textit{cf}. \cite{Massopust2022,Samko1993}]
The fractional derivative operator $D^\alpha$ on $\Psi_+'$ is defined by
\[
\inn{D^\alpha f}{\varphi} := \inn{D^n(f * K_{n-\alpha})}{\varphi}, \quad n=\lceil \alpha\rceil,
\]
and the fractional integral operator $D^{-\alpha}$ on $\Psi_+'$ by
\[
\inn{D^{-\alpha}f}{\varphi} = \inn{f * K_\alpha}{\varphi}, 
\]
where $\inn{\cdot}{\cdot}$ denotes the pairing between distributions and test functions, and $*$ the convolution of distributions.
\end{definition}


A key identity is
\begin{equation}
D^\alpha\left[\frac{(x-k)_+^{\alpha-1}}{\Gamma(\alpha)}\right]=\delta(x-k),
\label{D}
\end{equation}
which plays a fundamental role in the construction of fractional splines. Here, $\delta$ denotes the Dirac delta distribution defined by
\[
\langle \delta(x-a),\varphi\rangle=\varphi(a),\quad a\in \R, \quad \varphi\in\Psi_+.
\]
\section{Fractional Operators and Distributional Framework}

In this section, we briefly recall the fractional operators and distributional setting used in the sequel. Our analysis is carried out within the framework of tempered distributions and Lizorkin spaces, which provide a natural setting for fractional differentiation.

Let $\mathcal{S}(\mathbb{R})$ denote the Schwartz space of rapidly decreasing smooth functions, and let $\Psi \subset \mathcal{S}(\mathbb{R})$ be the Lizorkin space defined by
\[
\Psi := \{ \psi \in \mathcal{S}(\mathbb{R}) : D^m \psi(0) = 0, \ \forall m \in \mathbb{N}_0 \}.
\]
We also define $\Psi_+ := \{ \psi \in \Psi : \operatorname{supp}(\psi) \subset [0,\infty) \}$.

For $\alpha > 0$, recall the definition of the kernel $K_\alpha$ in \eqref{Kalpha}. It is needed for the following definitions.

The fractional derivative $D^\alpha$ and fractional integral $D^{-\alpha}$ are defined on the space of distribution $\Psi_+'$ via convolution:
\[
\langle D^\alpha f, \varphi \rangle := \langle D^n (f * K_{n-\alpha}), \varphi \rangle, \quad n = \lceil \alpha \rceil,
\]
\[
\langle D^{-\alpha} f, \varphi \rangle := \langle f * K_\alpha, \varphi \rangle.
\]

A fundamental identity, which plays a central role in the construction of fractional B-splines, is given by
\begin{equation}
D^\alpha\left[\frac{(x-k)_+^{\alpha-1}}{\Gamma(\alpha)}\right]=\delta(x-k),
\label{D}
\end{equation}
where $ \alpha > 1$ and $\delta$ denotes the Dirac delta distribution.

This identity allows fractional B-splines to be interpreted as solutions of fractional distributional differential equations with discrete sources.
\section{Series representation and Generating function for a new class of fractional spline polynomials}
In this section, we construct generating function for a new class of fractional spline polynomials, which involves the function $\widehat{B_{n}}(\omega )$. We give some fundamental properties of this function and this new class of fractional spline polynomials.

It is time to give generating function for the function $\widehat{B_{n}}(\omega )$ by the following theorem:
\begin{theorem} Let $\omega$ be a real parameter.  Generating function for the function $\widehat{B_{n}}(\omega )$ is given by
\begin{equation}
\sum_{n=0}^{\infty }\widehat{B_{n}}(\omega )\frac{t^{n}}{n!}=e^{\frac{%
1-e^{-i\omega }}{i\omega }t}.  \label{gp-1}
\end{equation}
\end{theorem}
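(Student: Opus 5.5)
The plan is to substitute the closed form \eqref{FourierBn} for $\widehat{B_n}(\omega)$ and recognise the resulting series as an exponential series. Set
\[
z(\omega):=\frac{1-e^{-i\omega}}{i\omega},
\]
so that $\widehat{B_n}(\omega)=z(\omega)^n$ for every $n\in\N$. For $n=0$ we adopt the natural convention $\widehat{B_0}(\omega):=z(\omega)^0=1$. This is consistent with \eqref{FourierFractional} at $\alpha=0$ and corresponds to $B_0=\delta$, the identity for convolution, since $B_n=B_{n-1}*B_1$. At $\omega=0$ the quotient is understood by continuity: $z(0)=1$, which agrees with $\widehat{B_n}(0)=\int_\R B_n=1$.

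With this convention the left-hand side of \eqref{gp-1} becomes
\[
\sum_{n=0}^\infty z(\omega)^n\frac{t^n}{n!}=\sum_{n=0}^\infty \frac{(z(\omega)t)^n}{n!}.
\]
This is exactly the Taylor series of $e^{z(\omega)t}$ about $t=0$, and that exponential is the right-hand side of \eqref{gp-1}.

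For convergence, note that for real $\omega\neq 0$ we have $|1-e^{-i\omega}|=2|\sin(\omega/2)|$, so
\[
|z(\omega)|=\left|\frac{\sin(\omega/2)}{\omega/2}\right|\le 1 .
\]
Hence the series converges absolutely and locally uniformly for all complex $t$, with $e^{|t|}$ as a dominating bound. It is therefore an identity of entire functions in $t$, not only of formal power series.

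The only step that needs care is not computational. It is the meaning of the $n=0$ term and of the point $\omega=0$, since the paper defines $B_n$ only for $n\ge1$. I will settle both explicitly through the conventions above, $\widehat{B_0}\equiv1$ and $z(0)=1$. Everything else is the power series of the exponential.
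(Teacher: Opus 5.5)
Your proposal is correct and follows the paper's own proof: substitute \eqref{FourierBn} so that $\widehat{B_n}(\omega)=\bigl(\frac{1-e^{-i\omega}}{i\omega}\bigr)^n$, then recognise the exponential series. Your extra remarks (the convention $\widehat{B_0}\equiv 1$, the value at $\omega=0$, and the bound $\bigl|\frac{1-e^{-i\omega}}{i\omega}\bigr|\le 1$ giving convergence for all $t$) spell out details the paper leaves implicit.
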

\begin{proof}
Using \eqref{FourierBn}, we obtain
\[
\sum_{n=0}^{\infty}
\widehat{B_{n}}(\omega)\frac{t^n}{n!}=
\sum_{n=0}^{\infty}
\left(\frac{1 - e^{-i\omega}}{i\omega}
\right)^{n}\frac{t^n}{n!}.
\]
This completes the proof.
\end{proof}

By using (\ref{gp-1}), we get%
\begin{equation*}
e^{-\frac{1}{i\omega }t}\sum_{n=0}^{\infty }\widehat{B_{n}}(\omega )\frac{%
t^{n}}{n!}=e^{-\frac{e^{-i\omega }}{i\omega }t}.
\end{equation*}%
Integrating both sides of the above equation from $0$ to $\infty $, we get%
\begin{equation*}
\sum_{n=0}^{\infty }\widehat{B_{n}}(\omega )\frac{1}{n!}\int\limits_{0}^{%
\infty }t^{n}e^{-\frac{1}{i\omega }t}dt\int\limits_{0}^{\infty }t^{n}e^{-%
\frac{1}{i\omega }t}dt=\int\limits_{0}^{\infty }e^{-\frac{e^{-i\omega }}{%
i\omega }t}dt.
\end{equation*}%
After some calculations, with the aid of following well-known formula%
\begin{equation*}
\int\limits_{0}^{\infty }u^{n}e^{u}du=n!,
\end{equation*}%
we get the following series representation for the function $\widehat{B_{n}}(\omega )$:

\begin{theorem} (Series Representation for $\widehat{B_{n}}(\omega )$)
\begin{equation*} 
\sum_{n=0}^{\infty }\left( i\omega \right) ^{n}\widehat{B_{n}}(\omega )=\cos
\left( \omega \right) +i\sin \left( \omega \right) .
\end{equation*}
\end{theorem}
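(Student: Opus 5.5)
The cleanest route avoids the Laplace-integral manipulation sketched before the statement and works directly from the closed form \eqref{FourierBn}. Multiplying \eqref{FourierBn} by $(i\omega)^n$ cancels the denominator exactly:
\[
(i\omega)^n\,\widehat{B_n}(\omega)=\bigl(1-e^{-i\omega}\bigr)^n, \qquad n\in\N_0,
\]
with the convention $\widehat{B_0}(\omega)=1$ used in \eqref{gp-1}. The left-hand side of the claimed identity is therefore the geometric series $\sum_{n\ge 0} z^n$ with $z:=1-e^{-i\omega}$. For $|z|<1$ its sum is
\[
\frac{1}{1-z}=\frac{1}{e^{-i\omega}}=e^{i\omega}=\cos(\omega)+i\sin(\omega),
\]
which is exactly the asserted right-hand side. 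So the proof has three steps: the cancellation, summing the geometric series, and applying Euler's formula.

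The main obstacle is convergence, which the statement leaves implicit. We have
\[
|z|=|1-e^{-i\omega}|=2\,|\sin(\omega/2)|,
\]
so the series converges (absolutely) precisely when $|\sin(\omega/2)|<\tfrac12$. Equivalently, $\omega$ must lie within distance $\pi/3$ of some point of $2\pi\mathbb{Z}$. For other real $\omega$ the terms do not tend to zero, so the identity can hold there only in a summability sense. I would state the result with this restriction on $\omega$. At $\omega=0$ one reads $(i\omega)^n\widehat{B_n}(\omega)$ as its limit $\delta_{n,0}$, and both sides then equal $1$.

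To connect with the computation preceding the statement, I would also record the formal derivation. Multiply \eqref{gp-1} by $e^{-t/(i\omega)}$ and integrate over $t\in(0,\infty)$ term by term, using
\[
\int_0^\infty t^n e^{-st}\,dt=\frac{n!}{s^{n+1}}, \qquad s=\frac{1}{i\omega}.
\]
This gives $\sum_n \widehat{B_n}(\omega)(i\omega)^{n+1}$ on the left and $\bigl(e^{-i\omega}/(i\omega)\bigr)^{-1}$ on the right. Dividing by $i\omega$ recovers the same identity.

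I would present this only as a heuristic (a Borel–Laplace summation). The exponent $s=1/(i\omega)$ is purely imaginary, so the integrals do not converge in the classical sense, and justifying the interchange of sum and integral is exactly as delicate as the convergence question above. The rigorous proof will therefore rest on the geometric-series argument under the hypothesis $|\sin(\omega/2)|<\tfrac12$.
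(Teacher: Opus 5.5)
Your proof is correct, and it takes a different route from the paper. The paper multiplies the exponential generating function \eqref{gp-1} by $e^{-t/(i\omega)}$ and integrates term by term over $(0,\infty)$ using $\int_0^\infty u^n e^{-u}\,du=n!$. In effect it Laplace-transforms $e^{ct}$, with $c=(1-e^{-i\omega})/(i\omega)$, into $\sum_n c^n s^{-n-1}=1/(s-c)$, and then sets $s=1/(i\omega)$. You bypass this by noting $(i\omega)^n\widehat{B_n}(\omega)=(1-e^{-i\omega})^n$, so the claim is a geometric series plus Euler's formula.

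The paper's route presents the identity as a consequence of \eqref{gp-1}. Yours is rigorous and exposes the hypothesis $|1-e^{-i\omega}|<1$, i.e.\ $|\sin(\omega/2)|<\tfrac12$, which the theorem as stated omits and without which the left-hand side diverges. Your caution about the Laplace step is, if anything, too mild. Besides the purely imaginary exponent on the left, the right-hand integrand $\exp\bigl(-e^{-i\omega}t/(i\omega)\bigr)$ has modulus $\exp(t\sin\omega/\omega)$. This grows whenever $0<|\omega|<\pi$, in particular on $0<|\omega|<\pi/3$, where the identity is true.

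The paper's computation can be made rigorous by taking $\mathrm{Re}\,s$ large, where termwise integration is dominated, and continuing analytically to $s=1/(i\omega)$. The termwise series there converges exactly when $|c|<|s|$, which is your condition $|1-e^{-i\omega}|<1$ again. So the two arguments ultimately reduce to the same geometric series in $c/s=1-e^{-i\omega}$.
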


By combining equation (\ref{FourierBn}) with the following genearting
function for the Stirling numbers of the second kind:%
\begin{equation*}
\frac{1}{n!}\left( e^{z}-1\right) ^{n}=\sum\limits_{m=0}^{\infty }S_{2}(m,n)%
\frac{z^{m}}{m!}
\end{equation*}%
(\textit{cf}. \cite{Simsek2013}), we get%
\begin{equation*}
\widehat{B_{n}}(\omega )=(-1)^{n}n!\sum\limits_{m=0}^{\infty }S_{2}(m,n)%
\frac{(-i)^{m-n}w^{m-n}}{m!}.
\end{equation*}%
After some calculations, we get%
\begin{equation}
\widehat{B_{n}}(\omega )=\sum\limits_{m=0}^{\infty }\frac{i^{3m+2n}S_{2}(m+n,n)}{\binom{m+n}{n}}\frac{w^{m}}{m!}.  \label{ay2}
\end{equation}
By combining equation (\ref{FourierBn}) with the following generating
function for the Bernoulli numbers of order $-n$:%
\begin{equation*}
\left( \frac{e^{z}-1}{z}\right) ^{n}e^{zx}=\sum\limits_{m=0}^{\infty
}B_{m}^{(-n)}(x)\frac{z^{m}}{m!}
\end{equation*}%
(\textit{cf}. \cite{SrivastavaChoi2012}), we get%
\begin{equation}
\widehat{B_{n}}(\omega )=\sum\limits_{m=0}^{\infty }i^{m}B_{m}^{(-n)}(n)%
\frac{w^{m}}{m!}.  \label{ay1}
\end{equation}%
Combining (\ref{ay1}) with (\ref{ay2}) yields 
\begin{equation*}
\sum\limits_{m=0}^{\infty }\frac{i^{3m+2n}S_{2}(m+n,n)}{\binom{m+n}{n}}\frac{w^{m}}{m!}=\sum\limits_{m=0}^{\infty }i^{m}B_{m}^{(-n)}(n)%
\frac{w^{m}}{m!}.
\end{equation*}
By comparing coefficients $\frac{w^{m}}{m!}$ on the both sides of the previous equation, we arrive at the following theorem:
\begin{theorem}
Let $n$ be a positive integer and   $m$ nonnegative integer. Then we have
\begin{equation}
B_{m}^{(-n)}(n)=(-1)^{m+n}\frac{S_{2}(m+n,n)}{\binom{m+n}{n}}.  \label{ay3}
\end{equation}
\end{theorem}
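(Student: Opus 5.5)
The plan is to compare coefficients of two power-series expansions in $\omega$ of the same function $\widehat{B_n}(\omega)=\bigl(\frac{1-e^{-i\omega}}{i\omega}\bigr)^n$, namely \eqref{ay1} and \eqref{ay2}, exactly as the text preceding the statement indicates. Throughout I write $w=\omega$. First I substitute $z=-i\omega$, which gives
\[
\frac{1-e^{-i\omega}}{i\omega}=\frac{1-e^{z}}{-z}=\frac{e^z-1}{z}.
\]
Hence $\widehat{B_n}(\omega)=\bigl(\frac{e^z-1}{z}\bigr)^n$ is an entire function of $z$, and both expansions converge for all real $\omega$. This justifies identifying coefficients once both series are written in powers of $\omega^m/m!$.

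\emph{Stirling side.} From $\frac{1}{n!}(e^z-1)^n=\sum_{k\ge n}S_2(k,n)\frac{z^k}{k!}$ I divide by $z^n$ and reindex with $k=m+n$. This gives
\[
\Bigl(\frac{e^z-1}{z}\Bigr)^n=\sum_{m\ge0}\frac{S_2(m+n,n)}{\binom{m+n}{n}}\frac{z^m}{m!}.
\]
Putting $z=-i\omega$ then yields a coefficient $(-i)^m S_2(m+n,n)/\binom{m+n}{n}$ of $\omega^m/m!$. I would match this with the paper's form \eqref{ay2}, whose unit factor is written $i^{3m+2n}=(-1)^n(-i)^m$.

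\emph{Bernoulli side.} Next I apply the generating function $\bigl(\frac{e^z-1}{z}\bigr)^n e^{zx}=\sum_m B_m^{(-n)}(x)\frac{z^m}{m!}$, choosing the value of $x$ that reproduces the spline transform. This gives \eqref{ay1}, with $B_m^{(-n)}(n)$ as the paper has it. Equating the coefficients of $\omega^m/m!$ in \eqref{ay1} and \eqref{ay2} and cancelling the common powers of $i$ then leaves the stated relation between $B_m^{(-n)}(n)$ and $(-1)^{m+n}S_2(m+n,n)/\binom{m+n}{n}$.

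\emph{Main obstacle and a problem I expect to find.} The delicate step is the bookkeeping of the unit factors and of the evaluation point $x$ in \eqref{ay1}. The extra factor $e^{zx}=e^{-i\omega x}$ must be compensated, or else $x$ must be $0$. Before committing, I would check the case $m=0$. There $B_0^{(-n)}(x)=1$, while the right-hand side of \eqref{ay3} equals $(-1)^n$. So for odd $n$ the statement as worded cannot hold under the standard generating function, and already $m=n=1$ gives $B_1^{(-1)}(1)=\frac32$ against $\frac12$.

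So I would carry out the comparison above, but I expect it to establish the identity only after correcting the evaluation point and the sign convention in \eqref{ay1} and \eqref{ay2}. The version that follows from the substitution is $B_m^{(-n)}(0)=S_2(m+n,n)/\binom{m+n}{n}$. The precise modification needed is the part of the plan I expect to require the most care.
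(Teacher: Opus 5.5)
Your verdict is right: the statement is false as written. Your plan is the paper's own argument, namely comparing coefficients of $\omega^m/m!$ in \eqref{ay1} and \eqref{ay2}. The paper reaches the stated identity only because both of those expansions contain a slip.

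Your $m=0$ check is already a complete disproof for odd $n$: $B_0^{(-n)}(x)=1$, while the right-hand side is $(-1)^nS_2(n,n)=(-1)^n$. More generally, $B_1^{(-n)}(x)=x+\frac n2$, so $B_1^{(-n)}(n)=\frac{3n}{2}$, whereas the right-hand side is $(-1)^{n+1}\frac n2$. Hence the identity fails at $m=1$ for every $n\ge1$. You should state this outright rather than hedge with ``I expect''. You should also drop your middle paragraph, which adopts \eqref{ay1} with evaluation point $n$. Neither choice $z=\pm i\omega$ turns $\bigl(\frac{e^z-1}{z}\bigr)^n e^{nz}$ into $\widehat{B_n}(\omega)$.

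The paper's two errors are as follows.
\begin{enumerate}
\item[(a)] In \eqref{ay2}, writing $(1-e^{-i\omega})^n=(-1)^n(e^{-i\omega}-1)^n$ produces one factor $(-1)^n$. The identity $(i\omega)^{-n}=(-1)^n(-i\omega)^{-n}$ produces a second, which the paper omits when it writes $(-i)^{m-n}w^{m-n}$. The two should cancel. So the true coefficient is $(-i)^mS_2(m+n,n)/\binom{m+n}{n}$, exactly as you computed, not $i^{3m+2n}S_2(m+n,n)/\binom{m+n}{n}$.
\item[(b)] In \eqref{ay1}, the factor $i^m$ means $z=i\omega$. Then $\frac{1-e^{-i\omega}}{i\omega}=\frac{e^{z}-1}{z}\,e^{-z}$, so the evaluation point must be $x=-n$, not $x=n$.
\end{enumerate}
With both corrections, the paper's comparison yields
\[
B_m^{(-n)}(-n)=(-1)^m\frac{S_2(m+n,n)}{\binom{m+n}{n}}.
\]
This is equivalent to your corrected form $B_m^{(-n)}(0)=S_2(m+n,n)/\binom{m+n}{n}$ via $z\mapsto -z$ in the generating function.

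The specialization \eqref{dd2} inherits the error: $B_1^{(-1)}(1)=\frac32\neq\frac12$. The correct known result concerns the negative-order Bernoulli numbers $B_n^{(-n)}(0)$.
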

Here we note that there are other different proof of equation  (\ref{ay3}), see, for detail \textit{cf}. \cite{GunREVISTA}, \cite{Simsekqd}, \cite{LuoSrivastava2011}.

Putting $m=n$ into (\ref{ay3}) yields the following known result:
\begin{equation}
B_{n}^{(-n)}(n)=\frac{S_{2}(2n,n)}{\binom{2n}{n}} \label{dd2}
\end{equation}%
(\textit{cf}. \cite{GunREVISTA}, \cite{Simsekqd}, \cite{LuoSrivastava2011}).

Observe that combining (\ref{dd2}) with the following definition of the Catalan numbers $$C_n = \frac{1}{n+1}\binom{2n}{n},$$ we have the following known result:
\[
B_{n}^{(-n)}(n)=\frac{S_{2}(2n,n)}{(n+1)\,C_n},
\]
 see  also \cite{GunREVISTA,LuoSrivastava2011}.

In this paper, we can use the following generating function for the generalized Array type polynomials, which have been constructed by  Simsek \cite{Simsek2013}:
\begin{equation}
\left(
\frac{\lambda b^t - a^t}{t}
\right)^k b^{xt}
=
\sum_{n=0}^{\infty}
\frac{S^{\,n+k}_{k}(x; a,b;\lambda)}{\binom{n+k}{k}}
\frac{t^n}{n!},
\label{stir}
\end{equation}
where $k\in\N$ and $a,b$ are real parameters. The above function also reduces to the generating function for the generalized Stirling numbers. 
\begin{theorem}
Let \textcolor{black}{$\alpha > 1$}. Then the following fractional Fourier-type generating function holds:
\begin{equation}
\widehat{B_\alpha}(\omega)e^{-i\omega x}=
\sum_{n=0}^{\infty}
\frac{\Gamma(\alpha+1)S^{n+\alpha}_{\alpha}(x; 1,e;1)}{\Gamma(n+\alpha+1)}(-i\omega)^n. \label{T}
\end{equation}
Equivalently, the above expression can be written as
\[
\widehat{B_\alpha}(\omega)=\sum_{n=0}^{\infty}
\sum_{j=0}^{n}(-i)^n\frac{\Gamma(\alpha+1)S^{j+\alpha}_{\alpha}(x; 1,e;1)}{\Gamma(j+\alpha+1)}
\omega^n.
\]
\end{theorem}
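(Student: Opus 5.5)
The plan is to obtain \eqref{T} by specializing Simsek's generating function \eqref{stir}: take $a=1$, $b=e$, $\lambda=1$, replace the integer $k$ by the real order $\alpha$, and substitute $t=-i\omega$. With these choices the left-hand side of \eqref{stir} becomes
\[
\left(\frac{e^{t}-1}{t}\right)^{\alpha} e^{xt}.
\]
At $t=-i\omega$ this equals $\left(\frac{e^{-i\omega}-1}{-i\omega}\right)^{\alpha}e^{-i\omega x}=\left(\frac{1-e^{-i\omega}}{i\omega}\right)^{\alpha}e^{-i\omega x}=\widehat{B_\alpha}(\omega)e^{-i\omega x}$, by \eqref{FourierFractional}.

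On the right-hand side I will rewrite the generalized binomial coefficient through \eqref{gamma}:
\[
\binom{n+\alpha}{\alpha}=\frac{\Gamma(n+\alpha+1)}{\Gamma(\alpha+1)\Gamma(n+1)}, \qquad\text{so}\qquad \frac{1}{\binom{n+\alpha}{\alpha}\, n!}=\frac{\Gamma(\alpha+1)}{\Gamma(n+\alpha+1)}.
\]
Together with $t^n=(-i\omega)^n$, this gives exactly the coefficients in \eqref{T}.

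The main obstacle is that \eqref{stir} was stated only for $k\in\N$, so the meaning of $S^{n+\alpha}_{\alpha}(x;1,e;1)$ must be fixed for real $\alpha>1$. I would \emph{define} these numbers as the Taylor coefficients in \eqref{stir} with $k=\alpha$, and then check that this is legitimate.
\begin{enumerate}
\item The function $g(t)=(e^t-1)/t$ is entire with $g(0)=1$, and its only zeros are at $t=2\pi i m$, $m\neq 0$.
\item Hence $g(t)^\alpha=\exp(\alpha\log g(t))$, with the branch of $\log g$ fixed by $\log g(0)=0$, is analytic on the disc $|t|<2\pi$. The same holds for $g(t)^\alpha e^{xt}$, so its Maclaurin series converges for $|t|<2\pi$.
\item For integer $\alpha$ the coefficients coincide with Simsek's, so this is a genuine extension.
\end{enumerate}

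It remains to check that this branch agrees with the principal power implicit in \eqref{FourierFractional}. For real $\omega$ one has
\[
\frac{1-e^{-i\omega}}{i\omega}=e^{-i\omega/2}\,\frac{\sin(\omega/2)}{\omega/2}.
\]
For $|\omega|<2\pi$ the sinc factor is positive and the argument is $-\omega/2\in(-\pi,\pi)$. The continuous branch along the segment from $0$ is therefore the principal one, and the identity \eqref{T} holds for $|\omega|<2\pi$ (as a convergent power series in $\omega$).

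For the ``equivalent'' form, I would multiply \eqref{T} by $e^{i\omega x}=\sum_k (ix)^k\omega^k/k!$ and form the Cauchy product, collecting powers of $\omega^n$ into a double sum over $j\le n$. I expect the stated double sum to require the factor $(ix)^{n-j}/(n-j)!$ inside; this should be double-checked. Alternatively, setting $x=0$ directly in \eqref{T} gives $\widehat{B_\alpha}$ itself as a single series in $\omega$.
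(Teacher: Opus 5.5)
Your proposal follows the paper's proof: specialize \eqref{stir} at $a=1$, $b=e$, $\lambda=1$ with $k$ replaced by $\alpha$, put $t=-i\omega$, and use $1/\bigl(\binom{n+\alpha}{n}n!\bigr)=\Gamma(\alpha+1)/\Gamma(n+\alpha+1)$. Beyond the paper, it supplies two things the paper silently assumes: a meaning for $S^{n+\alpha}_{\alpha}(x;1,e;1)$ at non-integer $\alpha$, via the branch of $\bigl((e^t-1)/t\bigr)^{\alpha}$ equal to $1$ at $t=0$, and the restriction $|\omega|<2\pi$, which is genuinely necessary because for non-integer $\alpha$ this function has branch points at $t=\pm 2\pi i$.

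Your doubt about the ``equivalent'' form is justified, since the paper gives no derivation of it. Write $c_j$ for the coefficient of $(-i\omega)^j$ in \eqref{T}. The Cauchy product with $e^{i\omega x}$ gives the coefficient of $\omega^n$ as $(-i)^n\sum_{j=0}^{n}c_j\,(-x)^{n-j}/(n-j)!$, equivalently $\sum_{j=0}^{n}(-i)^j c_j\,(ix)^{n-j}/(n-j)!$. So your factor $(ix)^{n-j}/(n-j)!$ is right only if $(-i)^n$ is replaced by $(-i)^j$. The stated double sum is false already at $n=1$: there it gives $-i(1+\alpha/2+x)$ instead of the true value $-i\alpha/2$.
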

\begin{proof}
Setting $a=1$, $b=e$, and $\lambda=1$ into \eqref{stir} and replacing $t \to -i\omega$, we obtain
\[
\left(\frac{1 - e^{-i\omega}}{i\omega}
\right)^{\alpha}e^{-i\omega x}=\sum_{n=0}^{\infty}
\frac{S^{\,n+\alpha}_{\alpha}(x; 1,e;1)}{\binom{n+\alpha}{n}}
\frac{(-i\omega)^n}{n!}.
\]
Finally, using \eqref{FourierFractional}, we obtain
\begin{equation}
\widehat{B_\alpha}(\omega)e^{-i\omega x}=
\sum_{n=0}^{\infty}
\frac{S^{\,n+\alpha}_{\alpha}(x; 1,e;1)}{\binom{n+\alpha}{n}}
\frac{(-i\omega)^n}{n!},\label{11}
\end{equation}
where the binomial coefficient is defined by
\[
\binom{n+\alpha}{\alpha}=\binom{n+\alpha}{n}=
\frac{\Gamma(n+\alpha+1)}{\Gamma(\alpha+1)\Gamma(n+1)}.
\]
This completes the proof.
\end{proof}
Using the Fourier shift property, we have
\begin{equation}
\mathcal{F}\{B_\alpha(t - x)\}(\omega)=
e^{-i\omega x}\widehat{B_\alpha}(\omega).
\label{12}
\end{equation}
Combining \eqref{11} with \eqref{12}, we obtain
\begin{equation}
\mathcal{F}\{B_\alpha(t - x)\}(\omega)=
\sum_{n=0}^{\infty}
\frac{S^{\,n+\alpha}_{\alpha}(x; 1,e;1)}{\binom{n+\alpha}{n}}
\frac{(-i\omega)^n}{n!}.
\label{13}
\end{equation}
Let
\[
c_n :=\frac{S^{\,n+\alpha}_{\alpha}(x;1,e;1)}
{\binom{n+\alpha}{n} n!}.
\]
Applying the inverse Fourier transform to \eqref{13}, we obtain
\begin{equation}
B_\alpha(t-x)= \sum_{n=0}^{\infty}
c_n \mathcal{F}^{-1}\{(-i\omega)^n\}(t).
\label{in}
\end{equation}
Using that the derivatives of the Dirac delta distribution are given by
\[
\langle \delta^{(n)},\varphi\rangle=(-1)^n \varphi^{(n)}(0),\quad n\in\N,
\]
and the identity (in the sense of distributions)
\[
\mathcal{F}^{-1}\{(-i\omega)^n\}(t)=(-1)^n\delta^{(n)}(t),
\]
we arrive at the following result.
\begin{corollary}[Double Fourier via Delta Expansion]
	Let \textcolor{black}{$\alpha > 1$} and assume the distributional representation
	\[
	B_\alpha(x)=\sum_{n=0}^{\infty} c_n\delta^{(n)}(x),
	\qquad c_n\in\mathbb{C}.
	\]
	Then, in the sense of tempered distributions,
	\[
	\mathcal{F}^2\{B_\alpha\}(x)=2\pi\,B_\alpha(-x)
	=2\pi\sum_{n=0}^{\infty} c_n(-1)^n\,\delta^{(n)}(x).
	\]
	Equivalently,
	\[
	\mathcal{F}\{\widehat{B_\alpha}\}(x)=	2\pi\sum_{n=0}^{\infty} c_n\,(-1)^n\,\delta^{(n)}(x).
	\]
\end{corollary}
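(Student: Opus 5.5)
The plan is to treat the statement as a formal consequence of two facts about tempered distributions. The first is the Fourier inversion identity $\mathcal{F}^2 u = 2\pi\, \check u$, where $\check u(x) := u(-x)$. The second is the reflection rule for derivatives of the Dirac delta, $\check{\delta^{(n)}} = (-1)^n \delta^{(n)}$. Since $\alpha>1$, the function $B_\alpha$ lies in $L^1(\R)\cap L^2(\R)$, so it is a tempered distribution and $\widehat{B_\alpha}$ is defined. The assumed expansion $B_\alpha=\sum_{n} c_n\delta^{(n)}$ is understood as convergence in $\mathcal{S}'(\R)$; equivalently, $\langle B_\alpha,\varphi\rangle = \sum_n c_n (-1)^n\varphi^{(n)}(0)$ for the admissible test functions.

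\textbf{Step 1.} First I would establish $\mathcal{F}^2 u = 2\pi\check u$ on $\mathcal{S}'$. For $\varphi\in\mathcal{S}(\R)$ this follows from the classical inversion formula $\mathcal{F}^{-1}\varphi(x)=\frac{1}{2\pi}\widehat\varphi(-x)$, which gives $\mathcal{F}^2\varphi = 2\pi\check\varphi$. The definition $\langle \mathcal{F}u,\varphi\rangle=\langle u,\mathcal{F}\varphi\rangle$ then transfers this to distributions:
\[
\langle \mathcal{F}^2 u,\varphi\rangle=\langle u,\mathcal{F}^2\varphi\rangle = 2\pi\langle u,\check\varphi\rangle = 2\pi\langle \check u,\varphi\rangle .
\]
Taking $u=B_\alpha$ gives $\mathcal{F}^2\{B_\alpha\}(x)=2\pi B_\alpha(-x)$. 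By definition $\mathcal{F}^2\{B_\alpha\}=\mathcal{F}\{\widehat{B_\alpha}\}$, so this also yields the ``equivalently'' form once the series is in place.

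\textbf{Step 2.} Next I would apply reflection term by term. Reflection $u\mapsto\check u$ is continuous on $\mathcal{S}'$, so it commutes with the convergent series. Then
\[
\langle \check{\delta^{(n)}},\varphi\rangle=\langle\delta^{(n)},\check\varphi\rangle=(-1)^n\check\varphi^{(n)}(0)=(-1)^n(-1)^n\varphi^{(n)}(0)=(-1)^n\langle\delta^{(n)},\varphi\rangle .
\]
Hence $B_\alpha(-x)=\sum_n c_n(-1)^n\delta^{(n)}(x)$. Multiplying by $2\pi$ gives both displayed identities. As a cross-check, one can compute $\mathcal{F}\delta^{(n)}=(i\omega)^n$ and then $\mathcal{F}\{(i\omega)^n\}=2\pi(-1)^n\delta^{(n)}$ directly.

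\textbf{Main obstacle.} The delicate point is the meaning and convergence of $\sum c_n\delta^{(n)}$. Such a series is never a convergent sum in $\mathcal{S}'$ for a nonzero function like $B_\alpha$, because any convergent sum is supported at $0$ while $B_\alpha$ is not. The statement therefore takes the expansion as a hypothesis, and I would keep it that way. I would make explicit that pairing with a test function reduces to $\sum_n c_n(-1)^n\varphi^{(n)}(0)$, and restrict to the class of test functions for which this sum converges, for instance entire functions of suitable exponential type in the spirit of \eqref{in}. Since $\mathcal{F}^2$ only reflects the test function, and reflection preserves such a class, the argument of Steps 1 and 2 goes through verbatim under that interpretation.
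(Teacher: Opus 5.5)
Your argument is correct and follows the paper's proof: apply $\mathcal{F}^2 u = 2\pi\,\check u$ to $B_\alpha$, then use $\delta^{(n)}(-x)=(-1)^n\delta^{(n)}(x)$ term by term in the assumed expansion. You also supply two justifications the paper omits, namely the duality proof of $\mathcal{F}^2 u = 2\pi\,\check u$ and the continuity of reflection that licenses the termwise step, and you correctly observe that any series $\sum_n c_n\delta^{(n)}$ converging in $\mathcal{S}'(\R)$ is supported at $\{0\}$, so the hypothesis cannot literally hold for $B_\alpha$ there — a point the paper does not address.
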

\begin{proof}
	For tempered distributions, the Fourier transform satisfies
	\[
	\mathcal{F}^2\{f\}(x)=2\pi f(-x).
	\]
	Applying this to $f=B_\alpha$ gives
	\[
	\mathcal{F}^2\{B_\alpha\}(x)=2\pi B_\alpha(-x).
	\]
	Using the assumed expansion and the identity
	\[
	\delta^{(n)}(-x)=(-1)^n \delta^{(n)}(x),
	\]
	we obtain
	\[
	B_\alpha(-x)=\sum_{n=0}^{\infty} c_n\delta^{(n)}(-x)=\sum_{n=0}^{\infty} c_n(-1)^n\,\delta^{(n)}(x).
	\]
	Multiplying by $2\pi$ yields the claim.
\end{proof}
\begin{remark}
	This result shows that the second Fourier transform acts diagonally on the delta-derivative basis, introducing only a reflection and the sign factor $(-1)^n$. Hence, the distributional structure of $B_\alpha$ is preserved under $\mathcal{F}^2$.
\end{remark}
\begin{corollary}[n-fold Fourier Transform]
	Let $f$ be a tempered distribution. Then
	\[
	\mathcal{F}^n f(x)=	(2\pi)^{\lfloor n/2\rfloor}
	\begin{cases}
	f(x), & n\equiv 0 \ (\mathrm{mod}\ 4),\\
	\widehat{f}(x), & n\equiv 1 \ (\mathrm{mod}\ 4),\\
	f(-x), & n\equiv 2 \ (\mathrm{mod}\ 4),\\
	\widehat{f}(-x), & n\equiv 3 \ (\mathrm{mod}\ 4).
	\end{cases}
	\]
\end{corollary}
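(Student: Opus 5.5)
The plan is to reduce the statement to two operator identities on $\mathcal{S}'(\R)$ and then do exponent bookkeeping. Let $R$ be the reflection operator, $(Rf)(x) := f(-x)$. For a tempered distribution this is defined by duality: $\inn{Rf}{\varphi} := \inn{f}{R\varphi}$. Recall also that $\mathcal{F}$ on $\mathcal{S}'(\R)$ is defined by $\inn{\widehat f}{\varphi} := \inn{f}{\widehat\varphi}$, with the convention \eqref{FourierBn}.

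\textbf{Step 1: the two identities.} I will establish
\[
\mathcal{F}^2 = 2\pi R, \qquad R\mathcal{F} = \mathcal{F}R, \qquad R^2 = \mathrm{id}
\]
on $\mathcal{S}'(\R)$. The first identity is exactly what was used in the proof of the preceding corollary (Double Fourier via Delta Expansion). On $\mathcal{S}(\R)$ it follows from the Fourier inversion formula $\varphi(x) = \frac{1}{2\pi}\int_\R \widehat\varphi(\omega)e^{ix\omega}\,d\omega$, read at $-x$. It then extends to $\mathcal{S}'(\R)$ by duality.

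The commutation $R\mathcal{F}=\mathcal{F}R$ is checked on Schwartz functions by the substitution $x\mapsto -x$ in $\int_\R \varphi(-x)e^{-ix\omega}\,dx$. It passes to distributions because both $\mathcal{F}$ and $R$ are defined by transposition and each is its own transpose. Finally, $R^2=\mathrm{id}$ is immediate.

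\textbf{Step 2: exponent bookkeeping.} Write $n = 4q + r$ with $r\in\{0,1,2,3\}$. Then
\[
\mathcal{F}^n = (\mathcal{F}^2)^{2q}\,\mathcal{F}^r = (2\pi)^{2q}R^{2q}\mathcal{F}^r = (2\pi)^{2q}\mathcal{F}^r.
\]
It remains to evaluate $\mathcal{F}^r$ for the four residues:
\[
\mathcal{F}^0=\mathrm{id}, \qquad \mathcal{F}^1=\mathcal{F}, \qquad \mathcal{F}^2=2\pi R, \qquad \mathcal{F}^3=\mathcal{F}\,\mathcal{F}^2 = 2\pi\,\mathcal{F}R = 2\pi R\mathcal{F}.
\]
The last case uses the commutation from Step 1, so that $\mathcal{F}^3 f(x) = 2\pi\,\widehat f(-x)$.

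Since $\lfloor n/2\rfloor = 2q + \lfloor r/2\rfloor$, the accumulated powers of $2\pi$ are exactly $(2\pi)^{\lfloor n/2\rfloor}$. This gives the four cases of the statement. (Alternatively, the same result follows by induction on $n$, applying $\mathcal{F}^{n+2} = 2\pi R\,\mathcal{F}^n$.)

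\textbf{Main obstacle.} The only genuinely nontrivial point is the distributional version of the identities in Step 1, namely that reflection and the Fourier transform commute on $\mathcal{S}'(\R)$ and not just on $\mathcal{S}(\R)$. I will handle this by the transposition argument above rather than by manipulating integrals of distributions directly. I will also state explicitly that the constant $2\pi$ depends on the non-unitary normalization in \eqref{FourierBn}.
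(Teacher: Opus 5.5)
Your argument is correct and is exactly the iteration the paper has in mind: the paper gives no separate proof of this corollary, treating it as an immediate consequence of the identity $\mathcal{F}^2 f(x) = 2\pi f(-x)$ on tempered distributions used in the preceding corollary. Your decomposition $n = 4q + r$ with $\lfloor n/2\rfloor = 2q + \lfloor r/2\rfloor$ just makes that bookkeeping explicit. The commutation $R\mathcal{F} = \mathcal{F}R$ that you flag as the main obstacle needs no separate transposition argument, since $R = (2\pi)^{-1}\mathcal{F}^2$ on $\mathcal{S}'(\R)$ and therefore trivially commutes with $\mathcal{F}$.
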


\begin{corollary}[$n$-fold Fourier Transform of Fractional B-Splines]
	Let \textcolor{black}{$\alpha > 1$} and assume
	\[
	B_\alpha(x)=\sum_{m=0}^{\infty} c_m \delta^{(m)}(x)
	\]
	in the sense of \textcolor{black}{tempered} distributions. Then, for any $n\in\mathbb{N}$,
	\[
	\mathcal{F}^n\{B_\alpha\}(x)=(2\pi)^{\lfloor n/2\rfloor}	\sum_{m=0}^{\infty} c_m\sigma_{n,m}\,\delta^{(m)}(\pm x),
	\]
	where the sign and coefficients are given by
	\[
	\sigma_{n,m}=
	\begin{cases}
	1, & n\equiv 0 \ (\mathrm{mod}\ 4),\\
	(i)^m, & n\equiv 1 \ (\mathrm{mod}\ 4),\\
	(-1)^m, & n\equiv 2 \ (\mathrm{mod}\ 4),\\
	(-i)^m, & n\equiv 3 \ (\mathrm{mod}\ 4),
	\end{cases}
	\]
	and the argument is reflected for even $n$.
\end{corollary}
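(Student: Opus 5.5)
The plan is to reduce everything to the preceding Corollary on the $n$-fold Fourier transform and then act termwise on the assumed expansion $B_\alpha=\sum_{m\ge0}c_m\delta^{(m)}$.

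\textbf{Step 1 (reduction and termwise action).} Write $n=4q+r$ with $r\in\{0,1,2,3\}$. By the preceding Corollary, $\mathcal{F}^n\{B_\alpha\}$ equals $(2\pi)^{\lfloor n/2\rfloor}$ times one of $B_\alpha(x)$, $\widehat{B_\alpha}(x)$, $B_\alpha(-x)$ or $\widehat{B_\alpha}(-x)$, according to $r$. The series is assumed to converge in $\mathcal{S}'(\R)$. Since $\mathcal{F}$ and the reflection $x\mapsto -x$ are continuous linear maps on $\mathcal{S}'(\R)$, each of these operations may be applied to the series term by term. It therefore suffices to determine the action of each operation on a single $\delta^{(m)}$ and to read off the factor $\sigma_{n,m}$.

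\textbf{Step 2 (even $n$).} For $r=0$ there is nothing to do, and $\sigma_{n,m}=1$. For $r=2$, use the identity $\delta^{(m)}(-x)=(-1)^m\delta^{(m)}(x)$ from the proof of the Double Fourier Corollary. This gives
\[
\mathcal{F}^n\{B_\alpha\}=(2\pi)^{\lfloor n/2\rfloor}\sum_m c_m(-1)^m\delta^{(m)}(x).
\]
Equivalently, this is $\sum_m c_m\,\delta^{(m)}(-x)$, which accounts for the reflected argument and gives $\sigma_{n,m}=(-1)^m$.

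\textbf{Step 3 (odd $n$).} Here one has to evaluate $\mathcal{F}\{\delta^{(m)}\}$ termwise. From $\langle\delta^{(m)},\varphi\rangle=(-1)^m\varphi^{(m)}(0)$ one computes that the transform of $\delta^{(m)}$ carries the factor $(i\,\cdot)^m$. After the reflection $x\mapsto -x$ (case $r=3$) this factor becomes $(-i\,\cdot)^m$. These factors produce exactly $\sigma_{n,m}=i^m$ for $r=1$ and $\sigma_{n,m}=(-i)^m$ for $r=3$.

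To express the result in the basis $\{\delta^{(m)}(\pm x)\}$ used in the statement, I would use the dictionary the paper adopts in passing from \eqref{13} to \eqref{in}. That passage is the identity $\mathcal{F}^{-1}\{(-i\omega)^m\}=(-1)^m\delta^{(m)}$, which identifies the Fourier symbol $(\mp i\omega)^m$ with the delta-derivative $(\mp1)^m\delta^{(m)}$. Reading the transformed series in this basis, the coefficient attached to the $m$-th basis element is $c_m\sigma_{n,m}$. Collecting the four residues $r$ then gives the displayed table.

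\textbf{Main obstacle.} The hard step is the odd case. In $\mathcal{S}'$ the transform of $\delta^{(m)}$ is literally a polynomial of degree $m$, not a delta-derivative. So the statement for odd $n$ only holds under the paper's identification of the two bases in \eqref{in}. I would make this interpretation explicit rather than hide it.

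A second point to check is the convergence of $\sum c_m\delta^{(m)}$ in $\mathcal{S}'$, which justifies the termwise operations in Step 1. I would take this as part of the hypothesis ``in the sense of tempered distributions''. Given the growth of $c_m$ from \eqref{T}, the series is better viewed as an analytic-functional expansion, and this should be noted.
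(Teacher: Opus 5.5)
\textbf{There is a genuine gap: Step 3 fails, and it cannot be repaired.} The odd-$n$ identity it is meant to establish is false.

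Your termwise computation is correct: $\mathcal{F}\{\delta^{(m)}\}(x)=(ix)^m$. So for $n\equiv 1$ you get $(2\pi)^{\lfloor n/2\rfloor}\sum_m c_m i^m x^m$, and for $n\equiv 3$ you get $(2\pi)^{\lfloor n/2\rfloor}\sum_m c_m (-i)^m x^m$. These are power series in $x$, not combinations of $\delta^{(m)}(\pm x)$.

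The ``dictionary'' you borrow from the passage \eqref{13}$\to$\eqref{in} does not relate two expressions of the same distribution. It is the inverse Fourier transform. ``Reading the series in the delta basis'' therefore means applying $\mathcal{F}^{-1}$, which only returns you to $\mathcal{F}^{n-1}\{B_\alpha\}$.

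Even taken at face value, the dictionary does not produce the table. It sends $(ix)^m$ to $\delta^{(m)}$ and $(-ix)^m$ to $(-1)^m\delta^{(m)}$. It would attach $c_m$ (for $n\equiv1$) and $(-1)^m c_m$ (for $n\equiv 3$) to $\delta^{(m)}$, not $i^m c_m$ and $(-i)^m c_m$. The displayed $\sigma_{n,m}$ arises only from the naive substitution $x^m\mapsto\delta^{(m)}$, which is not an identity at all.

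Your argument uses nothing about $B_\alpha$ beyond the expansion. It would therefore equally ``prove'' $\mathcal{F}\{\delta\}=\delta(\pm x)$, whereas $\mathcal{F}\{\delta\}=1$.

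The paper's proof is the same termwise sketch and does not supply this step either. Its two displayed identities, $\mathcal{F}\{\delta^{(m)}\}=(i\omega)^m$ and $\mathcal{F}^{-1}\{(i\omega)^m\}=(-1)^m\delta^{(m)}$, are in fact mutually inconsistent. You were right to flag the obstacle, but the honest conclusion is that the odd case does not hold, not that it holds ``under an identification''.

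\textbf{Step 2 also silently corrects the statement.} The displayed formula applies $\sigma_{n,m}$ \emph{and} a reflected argument for even $n$, and both cases then come out wrong:
\begin{itemize}
\item For $n\equiv 0$ this gives $\sum_m c_m\delta^{(m)}(-x)=B_\alpha(-x)$ instead of $B_\alpha(x)$.
\item For $n\equiv 2$ it gives $\sum_m c_m(-1)^m\delta^{(m)}(-x)=\sum_m c_m\delta^{(m)}(x)=B_\alpha(x)$ instead of $B_\alpha(-x)$.
\end{itemize}
Your ``equivalently'' treats the sign factor and the reflection as alternatives. That is the correct mathematics, but it is not what is written.

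\textbf{The hypothesis can never hold.} Your convergence caveat should be sharpened. Suppose $\sum_m c_m\delta^{(m)}$ converges in $\mathcal{S}'(\R)$. By Borel's lemma, choose $\varphi\in C_c^\infty(\R)$ with $\varphi^{(m)}(0)=(-1)^m/c_m$ whenever $c_m\neq 0$. Then every nonzero term pairs with $\varphi$ to give $1$, so all but finitely many $c_m$ must vanish, and the sum is supported in $\{0\}$. But $B_\alpha$ is an $L^1$ function with $\int B_\alpha=\widehat{B_\alpha}(0)=1$, so it cannot equal such a distribution.

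Hence the corollary is true only vacuously, and observing this is the only rigorous proof available. Your ``analytic functional'' remark points in this direction, but in that setting the $\mathcal{S}'$ operations you use termwise are not available.

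The correct non-vacuous content, for a finite sum $T=\sum_m c_m\delta^{(m)}$, is $\mathcal{F}^n T=(2\pi)^{\lfloor n/2\rfloor}\sum_m c_m\sigma_{n,m}\,e_m$. Here $e_m=\delta^{(m)}(x)$ for even $n$, with no extra reflection, and $e_m=x^m$ for odd $n$.
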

\begin{proof}
	We use the identities
	\[
	\mathcal{F}\{\delta^{(m)}(x)\}=(i\omega)^m,
	\quad
	\mathcal{F}^{-1}\{(i\omega)^m\}=(-1)^m \delta^{(m)}(x),
	\]
	and the general rule
	\[
	\mathcal{F}^2 f(x)=2\pi f(-x).
	\]
	Applying $\mathcal{F}^n$ to the expansion of $B_\alpha$ term-by-term and using the cyclic structure of the Fourier operator yields the stated result.
\end{proof}
\begin{remark}
	The above result shows that the action of the $n$-fold Fourier transform on fractional B-splines is completely determined by its effect on the delta-derivative basis. This reveals a cyclic spectral structure governed by phase factors $(\pm i)^m$ and reflections.
\end{remark}

\begin{theorem}
In the sense of \textcolor{black}{tempered} distributions,
\[
B_\alpha(t-x)=\sum_{n=0}^{\infty}	(-1)^n \frac{S^{n+\alpha}_{\alpha}(x;1,e;1)}
{\binom{n+\alpha}{n} n!} \delta^{(n)}(t).
\]
\end{theorem}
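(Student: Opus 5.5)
The plan is to start from the shifted Fourier expansion \eqref{13}, which was obtained from the generating function \eqref{stir} with $a=1$, $b=e$, $\lambda=1$, $t\to -i\omega$ together with the shift property \eqref{12}. Its right-hand side is a power series $\sum_n c_n(-i\omega)^n$ with $c_n$ as defined before \eqref{in}. The statement then follows from \eqref{in} once each monomial $(-i\omega)^n$ is inverted distributionally. So the proof has three parts: justify the termwise inversion, compute the inverse transform of each monomial, and collect the signs.

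\textbf{Step 1 (monomials).} I would first verify directly that
\[
\mathcal{F}\{\delta^{(n)}\}(\omega)=\langle \delta^{(n)}, e^{-i\omega\,\cdot}\rangle=(-1)^n\frac{d^n}{dt^n}e^{-i\omega t}\Big|_{t=0}=(i\omega)^n .
\]
Hence
\[
\mathcal{F}^{-1}\{(-i\omega)^n\}=(-1)^n\mathcal{F}^{-1}\{(i\omega)^n\}=(-1)^n\delta^{(n)} .
\]
This is exactly the identity quoted before the corollaries.

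\textbf{Step 2 (substitution).} Substituting Step 1 into \eqref{in} gives
\[
B_\alpha(t-x)=\sum_{n\ge0}(-1)^n c_n\,\delta^{(n)}(t),
\]
where $c_n = S^{n+\alpha}_{\alpha}(x;1,e;1)/\bigl(\binom{n+\alpha}{n}n!\bigr)$. This is precisely the claimed formula.

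\textbf{Step 3 (justifying termwise inversion).} This is the main obstacle. The series \eqref{13} is the Taylor expansion of the entire function $\omega\mapsto e^{-i\omega x}\bigl((1-e^{-i\omega})/(i\omega)\bigr)^\alpha$ near $\omega=0$; it is entire at least locally, with the principal branch fixed so that the value at $0$ is $1$. Its partial sums are polynomials, and they do not converge in $\cS'(\R)$ globally in $\omega$. I would therefore interpret the identity weakly, pairing against test functions whose Fourier transforms are suitably restricted. The concrete choice is to pair $B_\alpha(\cdot-x)$ with $\varphi$ whose Fourier transform is supported in a small interval around $0$ where the power series converges uniformly together with all derivatives.

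For such $\varphi$, Parseval gives
\[
\langle B_\alpha(\cdot-x),\varphi\rangle=\frac{1}{2\pi}\int \widehat{B_\alpha}(\omega)e^{-i\omega x}\,\overline{\widehat{\varphi}}\ \text{(appropriately conjugated)}\,d\omega .
\]
Uniform convergence then lets us exchange the sum and the integral. Each resulting term equals $(-1)^n c_n\langle\delta^{(n)},\varphi\rangle$ by Step 1.

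For general Schwartz $\varphi$, I would first attempt to extend by an analyticity/density argument. Failing that, I would state the result, as the paper does, as a formal distributional identity valid on the class of test functions for which the pairing series converges, namely those with band-limited Fourier transform. The remaining checks are routine: the sign bookkeeping $(-1)^n$, and that $\binom{n+\alpha}{n}$ is the Gamma-function binomial from the proof of \eqref{T}.
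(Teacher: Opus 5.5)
\textbf{Steps 1 and 2 are the paper's derivation; the gap is Step 3, and it cannot be closed because the statement is false.} The paper inverts \eqref{13} term by term in \eqref{in}, uses $\mathcal{F}^{-1}\{(-i\omega)^n\}=(-1)^n\delta^{(n)}$, and justifies nothing further. Your Steps 1 and 2 do exactly this. The identity fails in $\mathcal{S}'(\R)$, and equally in $\mathcal{D}'(\R)$ or $\Psi'$.

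Here is a counterexample. Take $\varphi\in C_c^\infty(\R)$ with $\varphi\ge 0$ and $\varphi\not\equiv 0$, supported in a compact subinterval of $(x,x+1)$ that avoids $0$. Then $\langle\delta^{(n)},\varphi\rangle=(-1)^n\varphi^{(n)}(0)=0$ for every $n$, so every partial sum of the right-hand side annihilates $\varphi$. On the other hand, \eqref{TimeFractional} gives $B_\alpha(s)=s^{\alpha-1}/\Gamma(\alpha)>0$ for $0<s<1$, so $\int_\R B_\alpha(t-x)\varphi(t)\,dt>0$. Put structurally: any limit of the series is supported at $\{0\}$, whereas $B_\alpha(\cdot-x)$ is a nonzero $L^1$ function.

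In fact the series does not converge in $\mathcal{S}'$ at all. Infinitely many $c_n$ are nonzero, since $e^{-i\omega x}\widehat{B_\alpha}(\omega)$ does not coincide with a polynomial near $\omega=0$. Borel's lemma then gives $\varphi\in C_c^\infty(\R)$ with $\varphi^{(n)}(0)=\overline{c_n}/\abs{c_n}^2$ whenever $c_n\neq 0$. For this $\varphi$ the pairing $\sum_n c_n\varphi^{(n)}(0)$ has infinitely many terms equal to $1$.

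\textbf{What your Step 3 does prove is a weaker statement.} For $\alpha\notin\N$ the function $\omega\mapsto e^{-i\omega x}\widehat{B_\alpha}(\omega)$ is not entire. It has branch points at $\omega=\pm 2\pi$, so its Taylor series converges only for $\abs{\omega}<2\pi$.

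Now take $\varphi$ with $\supp\widehat\varphi\subset[-r,r]$ and $r<2\pi$. Then $\abs{\varphi^{(n)}(0)}\le \frac{r^n}{2\pi}\|\widehat\varphi\|_{L^1}$. Parseval together with uniform convergence on $[-r,r]$ then gives $\langle B_\alpha(\cdot-x),\varphi\rangle=\sum_n c_n\varphi^{(n)}(0)$. That argument is sound.

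Neither density nor analyticity can extend it to all of $\mathcal{S}$. The $\mathcal{S}$-closure of this class still consists of functions whose Fourier transforms vanish outside $[-r,r]$, so the class is not dense. The counterexample above shows the extension is false in any case.

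So your fallback does not prove the theorem as stated. It replaces it with the only correct version: an identity of functionals on a Paley--Wiener class of band-limited test functions. Equivalently, it is just the Taylor expansion of $e^{-i\omega x}\widehat{B_\alpha}$ near $\omega=0$, not an identity of tempered distributions.
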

\begin{remark}
In this representation, the shift parameter $x$ does not appear explicitly in the Dirac distributions but is encoded in the coefficients.
\end{remark}
\begin{theorem}
Then $B_\alpha(t-x)$ admits the shifted representation
\[
B_\alpha(t-x)=\sum_{m=0}^\infty d_m(x) \delta^{(m)}(t-x),
\]
where
\[
d_m(x)=\sum_{n=0}^{m} c_n(-1)^n \frac{x^{m-n}}{(m-n)!}.
\]
\end{theorem}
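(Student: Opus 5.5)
Starting from the preceding theorem, which writes $B_\alpha(t-x)=\sum_{n\ge0}(-1)^n c_n\,\delta^{(n)}(t)$ with $c_n$ as defined before \eqref{in}, the task is to re-expand every $\delta^{(n)}(t)$ about the point $x$ instead of $0$. The tool is the distributional Taylor formula for the translated delta: $\delta^{(n)}(t)=\delta^{(n)}\bigl((t-x)+x\bigr)$. I would start from the exact formal identity
\[
\delta(t)=\sum_{k=0}^{\infty}\frac{x^{k}}{k!}\,\delta^{(k)}(t-x),
\]
and differentiate it $n$ times, which gives
\[
\delta^{(n)}(t)=\sum_{k=0}^{\infty}\frac{x^{k}}{k!}\,\delta^{(n+k)}(t-x).
\]
Pairing with a test function $\varphi$ shows what this means: the left side gives $(-1)^n\varphi^{(n)}(0)$, and the right side gives $(-1)^n\sum_k \frac{(-x)^k}{k!}\varphi^{(n+k)}(x)$. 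This is exactly the Taylor series of $\varphi^{(n)}$ about $x$, evaluated at $0$.

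I would then substitute this into the preceding theorem and collect terms. Putting $m=n+k$ gives
\[
B_\alpha(t-x)=\sum_{m=0}^{\infty}\Bigl(\sum_{n=0}^{m}(-1)^n c_n\frac{x^{m-n}}{(m-n)!}\Bigr)\delta^{(m)}(t-x),
\]
which is the claimed formula with $d_m(x)$. The rearrangement is a Cauchy product: the coefficient sequences $\bigl((-1)^n c_n\bigr)$ and $\bigl(x^k/k!\bigr)$ are convolved, so $d_m$ is the coefficient of $s^m$ in $e^{xs}\sum_n (-1)^n c_n s^n$. On the Fourier side this is consistent: by \eqref{13}, $\sum_n c_n(-i\omega)^n$ is the transform of $B_\alpha(t-x)$, and multiplying it by $e^{i\omega x}$ to pass to the basis $\delta^{(m)}(t-x)$ reproduces the same convolution, up to the sign conventions in $\mathcal F\{\delta^{(m)}\}=(i\omega)^m$. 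I would check the signs against the paper's convention, because the printed formula uses $+x^{m-n}$ and my derivation should reproduce exactly that sign.

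The main obstacle is justification, not algebra. None of these infinite series of delta derivatives converges in $\mathcal S'$ for a general Schwartz test function; a finite-order distribution cannot equal $B_\alpha$, which is a regular function. So the theorem has to be read in the same formal or weak sense as the preceding theorem. One option is to test against entire test functions whose Taylor series converge, for example band-limited or Gelfand–Shilov type test functions whose Fourier transforms have compact support. On that class the paired series are absolutely convergent power series in the derivatives of $\varphi$, and Fubini/Tonelli justifies the rearrangement into the Cauchy product. The other option is to state the identity as an equality of formal power series in $\omega$ via \eqref{13}.

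I would adopt the second reading: take the preceding theorem as given, in whatever sense the paper asserts it, and prove that the shifted representation holds in that same sense. The coefficient identity for $d_m$ is then an exact identity between generating functions, $\sum_m d_m(x)s^m=e^{xs}\sum_n(-1)^n c_n s^n$, which holds at least in a neighborhood of $s=0$. There the series for $c_n$ converges by the generating function \eqref{stir}, since the left side of \eqref{stir} is analytic near $t=0$.
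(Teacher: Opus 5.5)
Your proposal is correct and follows essentially the paper's route: re-expand each $\delta^{(n)}(t)$ about $x$ by the distributional Taylor formula, substitute into the preceding theorem, and collect the terms with $m=n+k$ as a Cauchy product. Your re-expansion $\delta^{(n)}(t)=\sum_{k\ge0}\frac{x^k}{k!}\delta^{(n+k)}(t-x)$ is the correctly signed one; the paper's intermediate formula carries $(-x)^k$, which taken literally would give $(-x)^{m-n}$ rather than the stated $d_m$. Your remark that the identity can only hold formally is apt. If you pursue the band-limited test-function reading, note that $\widehat{\varphi}$ must be supported in a compact subset of $(-2\pi,2\pi)$, the disc of convergence of $\sum_n c_n(-i\omega)^n$ for non-integer $\alpha$.
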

\begin{proof}
For any test function $\varphi$, we use
\[
\langle \delta^{(n)}(t-x), \varphi(t)\rangle
= (-1)^n \varphi^{(n)}(x).
\]
By Taylor expansion,
\[
\varphi^{(n)}(x)=\sum_{k=0}^\infty \frac{x^k}{k!} \varphi^{(n+k)}(0),
\]
hence
\[
\delta^{(n)}(t-x)=\sum_{k=0}^\infty \frac{x^k}{k!} \delta^{(n+k)}(t).
\]
Equivalently,
\[
\delta^{(n)}(t)=\sum_{k=0}^\infty \frac{(-x)^k}{k!} \delta^{(n+k)}(t-x).
\]
Substituting this into the given expansion and collecting terms with the same order $m=n+k$ yields
\[
B_\alpha(t-x)=\sum_{m=0}^\infty \left(\sum_{n=0}^{m} c_n (-1)^n \frac{x^{m-n}}{(m-n)!}\right) \delta^{(m)}(t-x),
\]
which proves the claim.
\end{proof}
Next, we derive a fractional differential equation for fractional B-splines.
\begin{theorem}[Fractional Differential Equation for $B_\alpha$]	
The following equality holds in $\Psi'_+$:
\[
D^\alpha B_\alpha(t-x)=	\sum_{k=0}^{\infty}
(-1)^k \binom{\alpha}{k}\,\delta(t-x-k).
\]
In particular, $B_\alpha$ is a solution of the fractional distributional differential equation
\[
D^\alpha f(t)=\sum_{k=0}^{\infty} a_k\,\delta(t-k),
\qquad a_k=(-1)^k\binom{\alpha}{k}.
\]
\end{theorem}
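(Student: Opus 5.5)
The plan is to start from the time-domain representation \eqref{TimeFractional}, shifted by $x$:
\[
B_\alpha(t-x)=\sum_{k=0}^{\infty}(-1)^k\binom{\alpha}{k}\frac{(t-x-k)_+^{\alpha-1}}{\Gamma(\alpha)},
\]
and to apply $D^\alpha$ term by term, using the fundamental identity \eqref{D}. With the shift $k$ in \eqref{D} replaced by $x+k$, the identity gives $D^\alpha[K_\alpha(\cdot-x-k)]=\delta(\cdot-x-k)$. This relies on two facts. First, $D^\alpha$ is defined through convolution, $D^\alpha f=D^n(f*K_{n-\alpha})$. Second, convolution commutes with translation. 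So it suffices to justify \eqref{D} at $k=0$ and then translate.

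The $k=0$ case reduces to the semigroup property $K_\alpha*K_{n-\alpha}=K_n$. This follows from the Beta integral
\[
\int_0^t s^{\alpha-1}(t-s)^{n-\alpha-1}\,ds=B(\alpha,n-\alpha)\,t^{n-1}.
\]
One then uses $D^n K_n=D^n\bigl[t_+^{n-1}/(n-1)!\bigr]=\delta$ distributionally, since $D^{n-1}K_n$ is the Heaviside function.

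The main obstacle is interchanging $D^\alpha$ with the infinite sum in $\Psi_+'$. For fixed $t$ the series has only finitely many nonzero terms, because $(t-x-k)_+=0$ once $k>t-x$. Since test functions in $\Psi_+$ need not have compact support, however, I would pass through the pairing. For $\varphi\in\Psi_+$ I would write
\[
\langle D^\alpha f,\varphi\rangle=\langle f,(D^\alpha)^*\varphi\rangle,
\]
where the adjoint $(D^\alpha)^*\varphi=(-1)^n(D^n\varphi)*\check K_{n-\alpha}$ is a right-sided (Weyl-type) fractional derivative. The Lizorkin space is invariant under this operator, and its image decays rapidly at $+\infty$.

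The binomial coefficients are bounded by $|\binom{\alpha}{k}|=\cO(k^{-\alpha-1})$. The partial sums are dominated by the locally integrable, polynomially growing function
\[
\sum_k\Bigl|\binom{\alpha}{k}\Bigr|(t-x-k)_+^{\alpha-1}\lesssim (1+|t|)^{\alpha-1}\sum_{k}k^{-\alpha-1}.
\]
Paired against the rapidly decaying $(D^\alpha)^*\varphi$, this lets dominated convergence move the sum outside the pairing. Each term then yields $(-1)^k\binom{\alpha}{k}\varphi(x+k)$. This sum converges absolutely, since $\varphi$ is bounded and $\sum_k|\binom{\alpha}{k}|<\infty$ for $\alpha>1$. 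It is exactly the pairing of $\sum_k(-1)^k\binom{\alpha}{k}\delta(t-x-k)$ with $\varphi$, which proves the first equality.

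Finally, setting $x=0$ gives $D^\alpha B_\alpha=\sum_k a_k\delta(t-k)$ with $a_k=(-1)^k\binom{\alpha}{k}$, so $B_\alpha$ solves the stated equation. As a consistency check, I would take the Fourier transform of both sides. This gives
\[
(i\omega)^\alpha\widehat{B_\alpha}(\omega)=(1-e^{-i\omega})^\alpha=\sum_k(-1)^k\binom{\alpha}{k}e^{-ik\omega},
\]
which matches \eqref{FourierFractional}.
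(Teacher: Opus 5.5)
Your proposal is correct and follows the paper's own route: expand $B_\alpha(t-x)$ by the truncated-power series \eqref{TimeFractional}, apply \eqref{D} (translated to the knot $x+k$) term by term, and set $x=0$ for the particular case. What you add is justification the paper only asserts: the semigroup identity $K_\alpha*K_{n-\alpha}=K_n$ behind \eqref{D}, and a dominated-convergence argument for moving $D^\alpha$ through the series. That argument only needs $(D^\alpha)^*\varphi$ to be bounded and rapidly decaying on $[x,\infty)$, so you can drop your aside that the Lizorkin space is invariant under the Weyl-type adjoint, which fails for the paper's $\Psi$ and is never used.
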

\begin{proof}
We use the well-known identity \eqref{D}, we obtain
\[
D^\alpha\!\left[\frac{(t-k)_+^{\alpha-1}}{\Gamma(\alpha)}\right]=\delta(t-k).
\]
The B-spline admits the truncated power representation
\[
B_\alpha(t-x)=\frac{1}{\Gamma(\alpha)}
\sum_{k=0}^{\infty}
(-1)^k \binom{\alpha}{k}
(t-x-k)_+^{\alpha-1}.
\]
Applying $D^\alpha$ term-by-term (which is justified in $\Psi'_+$), we obtain
\[
D^\alpha B_\alpha(t-x)=	\sum_{k=0}^{\infty}
(-1)^k \binom{\alpha}{k} \delta(t-x-k),
\]
which proves the claim.
\end{proof}
\begin{remark}
The above result shows that B-splines arise naturally as solutions of fractional distributional differential equations with discrete distributional sources, linking spline theory with fractional calculus.
\end{remark}

\begin{theorem}
For $f\in\mathcal{S}(\mathbb{R})$, the we have
\begin{equation}
  \widehat{\nabla^\alpha f}(\omega)=
(1-e^{-i\omega})^\alpha \label{DD1}
\widehat{f}(\omega).  
\end{equation}
\end{theorem}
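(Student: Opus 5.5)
The operator $\nabla^\alpha$ has not been defined up to this point. I will take it to be the fractional backward difference of Unser--Blu,
\[
\nabla^\alpha f(x)=\sum_{k=0}^{\infty}(-1)^k\binom{\alpha}{k}f(x-k),
\]
with binomial coefficients given by \eqref{gamma}. These are the same coefficients $a_k$ that appear in the fractional differential equation for $B_\alpha$ above. The plan has four steps: show the series makes sense, take the Fourier transform term by term, use the shift property \eqref{12} on each term, and recognise the binomial series of $(1-z)^\alpha$ at $z=e^{-i\omega}$.

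\textbf{Convergence of the series.} First I will check that the series defining $\nabla^\alpha f$ converges absolutely in $L^1(\R)$. For $\alpha>0$ and non-integer $\alpha$ one has the standard Gamma-function asymptotics
\[
\binom{\alpha}{k}=\frac{(-1)^k\,\Gamma(k-\alpha)}{\Gamma(-\alpha)\Gamma(k+1)}=\cO(k^{-\alpha-1}),
\]
so $\sum_k\abs{\binom{\alpha}{k}}<\infty$. (For integer $\alpha$ the sum is finite.) Since $\|f(\cdot-k)\|_{L^1}=\|f\|_{L^1}<\infty$ for $f\in\cS(\R)$, the series converges absolutely in $L^1$, and $\nabla^\alpha f\in L^1(\R)$.

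\textbf{Term-by-term transform.} The Fourier transform is continuous from $L^1$ to $L^\infty$, so it can be applied term by term. By the shift property \eqref{12}, $\mathcal{F}\{f(\cdot-k)\}(\omega)=e^{-ik\omega}\widehat f(\omega)$. Hence
\[
\widehat{\nabla^\alpha f}(\omega)=\widehat f(\omega)\sum_{k=0}^\infty(-1)^k\binom{\alpha}{k}e^{-ik\omega}.
\]

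\textbf{Identifying the sum.} The remaining task is to show that
\[
\sum_{k=0}^\infty\binom{\alpha}{k}(-z)^k=(1-z)^\alpha \quad\text{at } z=e^{-i\omega},
\]
using the principal branch. This is the main subtlety, because $\abs{z}=1$ lies on the boundary of the disc of convergence. I would argue in two stages. For $\abs{z}<1$ the identity is the binomial theorem. Because the coefficients are absolutely summable, the series converges uniformly on the closed disc $\abs{z}\le 1$, so its sum is continuous there. The function $(1-z)^\alpha$ with the principal branch is also continuous on the closed disc, since $\operatorname{Re}(1-z)\ge 0$ there and $\alpha>0$. Abel's theorem, or simply this continuity, then extends the identity to $\abs{z}=1$, including $z=1$ where both sides vanish.

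One more point needs checking: that this branch agrees with the one implicit in \eqref{FourierFractional}. I will take that as the convention. This yields \eqref{DD1} pointwise for every real $\omega$.
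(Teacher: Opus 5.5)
Your proposal is correct and follows the same route as the paper. Both define $\nabla^\alpha f=\sum_k(-1)^k\binom{\alpha}{k}f(\cdot-k)$, transform term by term using the shift rule, and identify $\sum_k(-1)^k\binom{\alpha}{k}e^{-ik\omega}=(1-e^{-i\omega})^\alpha$. Your extra steps fill in what the paper calls ``elementary calculations'': absolute $L^1$ convergence from $\binom{\alpha}{k}=\cO(k^{-\alpha-1})$, and the continuity (Abel) argument extending the binomial series to the boundary $|z|=1$.
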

\begin{proof}
In order to give assertion of the theorem we need the following operator, which was given by Blu and Unser \cite{BluUnser2000}:
for $\textcolor{black}{\alpha > 1}$, by combining $\widehat{f(x-k)} = e^{-ik\omega}\widehat{f}(\omega)$ with the following the fractional forward difference operator:
\[
\nabla^\alpha f(x)=	\sum_{k=0}^{\infty}
(-1)^k	\binom{\alpha}{k} f(x-k),
\]
and using the binomial series, we get
\[
\sum_{k=0}^{\infty}	(-1)^k	\binom{\alpha}{k}
e^{-ik\omega}=	(1-e^{-i\omega})^\alpha.
\]
After some elementary calculations, we arrive at the desired result.
\end{proof}

\begin{theorem}
Let \textcolor{black}{$\alpha > 0$} and $n \in \mathbb{N}_0$. Then the generalized Stirling-type polynomials
$S^{n+\alpha}_{\alpha}(x;1,e;1)$ admit the explicit representation
\[
S^{n+\alpha}_{\alpha}(x;1,e;1)=	\frac{1}{\Gamma(\alpha+1)}
\sum_{k=0}^{\infty}	(-1)^k \binom{\alpha}{k}
(x+k)^{n+\alpha}.
\]
\end{theorem}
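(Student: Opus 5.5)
The plan is to read off the coefficients directly from the generating function \eqref{stir}, specialised to $a=1$, $b=e$, $\lambda=1$ and with the integer $k$ replaced by the real order $\alpha$, exactly as in the proof of \eqref{T}. With these choices \eqref{stir} becomes
\[
\left(\frac{e^{t}-1}{t}\right)^{\alpha} e^{xt}=\sum_{n=0}^{\infty}\frac{\Gamma(\alpha+1)\,S^{n+\alpha}_{\alpha}(x;1,e;1)}{\Gamma(n+\alpha+1)}\,t^{n},
\]
where the binomial coefficient is written through Gamma functions as after \eqref{11}. So the task reduces to computing the $n$-th Taylor coefficient of the left-hand side independently and then comparing coefficients.

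\textbf{Step 1.} Expand the numerator by the binomial series, as in the proof of \eqref{DD1}. Write $(e^t-1)^\alpha=(-1)^\alpha(1-e^{t})^\alpha$, fixing once and for all the branch of $(-1)^\alpha$ so that it is absorbed into the normalisation of $S^{n+\alpha}_\alpha$ (equivalently, one works with $1-e^{-t}$ and the substitution $t\to -t$, as in \eqref{T}). Then
\[
(1-e^{t})^\alpha=\sum_{k\ge 0}(-1)^k\binom{\alpha}{k}e^{kt},
\]
and therefore formally
\[
t^{-\alpha}(e^t-1)^\alpha e^{xt}=\sum_{k\ge0}(-1)^k\binom{\alpha}{k}\,t^{-\alpha}e^{(x+k)t}.
\]

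\textbf{Step 2.} Extract the coefficient of $t^n$ in $t^{-\alpha}e^{(x+k)t}$. This means extracting the ``coefficient of $t^{n+\alpha}$'' of $e^{(x+k)t}$, which formally equals $(x+k)^{n+\alpha}/\Gamma(n+\alpha+1)$. I would make this rigorous through the Hankel representation
\[
\frac{1}{\Gamma(s)}=\frac{1}{2\pi i}\int_{\mathcal H}e^{u}u^{-s}\,du,
\]
or equivalently through the Riemann--Liouville operator $D^{n+\alpha}$ applied to $K$-type kernels, as in \eqref{D}. Either route shows that the map $e^{ct}\mapsto c^{n+\alpha}/\Gamma(n+\alpha+1)$ is the correct fractional coefficient functional.

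\textbf{Step 3.} Equate with the right-hand side:
\[
\frac{\Gamma(\alpha+1)S^{n+\alpha}_\alpha}{\Gamma(n+\alpha+1)}=\frac{1}{\Gamma(n+\alpha+1)}\sum_k(-1)^k\binom{\alpha}{k}(x+k)^{n+\alpha}.
\]
Cancelling $\Gamma(n+\alpha+1)$ gives exactly the stated formula.

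The main obstacle is analytic, not algebraic. For $\alpha\notin\N$ the left side of the specialised \eqref{stir} is not a power series in $t$ termwise. Moreover, $\binom{\alpha}{k}=O(k^{-\alpha-1})$ while $(x+k)^{n+\alpha}$ grows like $k^{n+\alpha}$, so the $k$-series does not converge absolutely for $n\ge0$. I would therefore interpret the right-hand side as the fractional difference $\nabla^\alpha$ (from the proof of \eqref{DD1}) applied to the power function $y\mapsto y^{n+\alpha}$, regularised by Abel summation: insert a factor $r^k$ and let $r\to1^-$, or equivalently use analytic continuation in $n$ from $\Re(n+\alpha)<-1$, where the series converges absolutely. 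I would then justify the interchange of summation and coefficient extraction in that convergent regime, and extend to the stated one by uniqueness of analytic continuation. If this regularisation proves too delicate, the fallback is to state the identity as an equality of formal (Grünwald--Letnikov type) expansions, consistent with the formal treatment of \eqref{T}.
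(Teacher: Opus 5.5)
There is a genuine gap, and your proposed repairs do not close it; in fact the statement cannot be proved as written. Your Steps 1--3 are the paper's own argument: binomial expansion, then reading off a ``coefficient of $t^{n+\alpha}$'' of $e^{(x+k)t}$, then comparison with \eqref{stir}.

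The first failure is in Step 1. The factor $(-1)^\alpha$ cannot be absorbed into the normalisation, because $S^{n+\alpha}_\alpha(x;1,e;1)$ is already fixed by \eqref{stir}, with the branch of $\bigl(\frac{e^t-1}{t}\bigr)^\alpha$ equal to $1$ at $t=0$. Working with $1-e^{-t}$ does not help. The phase just reappears as the discrepancy between $t^{-\alpha}$ and $(-t)^{-\alpha}$. This is precisely what the paper discards when it rewrites $(i\omega)^{-\alpha}(-i\omega)^m$ as $(-i\omega)^n$ with $m=n+\alpha$. Concretely, take $\alpha=1$, $n=0$. The left side is the constant term of $\frac{e^t-1}{t}e^{xt}$, i.e.\ $1$, while the right side is $x-(x+1)=-1$. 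For $\alpha=m\in\N$ your computation, done with the finite binomial sum, is rigorous and yields
\[
S^{n+m}_m(x;1,e;1)=\frac{(-1)^m}{m!}\sum_{k=0}^{m}(-1)^k\binom{m}{k}(x+k)^{n+m}.
\]
So the claimed identity fails for every odd integer $\alpha$, already at $n=0$, where $S^{\alpha}_{\alpha}=1$.

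For $\alpha\notin\N$ the left side is a perfectly good polynomial, namely $\binom{n+\alpha}{n}B_n^{(-\alpha)}(x)$. Neither of your regularisations gives the right side a value.

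\emph{Abel summation.} Since $(-1)^k\binom{\alpha}{k}=\Gamma(k-\alpha)/(\Gamma(-\alpha)\,k!)\sim k^{-\alpha-1}/\Gamma(-\alpha)$, the terms are eventually of one sign and of size $k^{n-1}$. Hence the Abel means diverge as $r\to1^-$.

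\emph{Analytic continuation.} For $x>0$ the series $F(s)=\sum_{k}(-1)^k\binom{\alpha}{k}(x+k)^s$ converges for $\operatorname{Re}s<\alpha$, and for $\operatorname{Re}s<0$
\[
F(s)=\frac{1}{\Gamma(-s)}\int_0^\infty u^{\alpha-s-1}e^{-xu}\Bigl(\frac{1-e^{-u}}{u}\Bigr)^{\alpha}\,du .
\]
Its meromorphic continuation has a simple pole at $s=n+\alpha$ with residue $\frac{\sin\pi\alpha}{\pi}\,\Gamma(\alpha+1)\,S^{n+\alpha}_\alpha(x;1,e;1)$, which is nonzero. Thus the Stirling-type polynomial is the residue of the continued series, not a value of it.

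\emph{Hankel route.} Your Step 2 breaks at the same point. The functional $e^{ct}\mapsto c^{n+\alpha}/\Gamma(n+\alpha+1)$ is correct on each exponential. But exchanging it with the $k$-sum needs the binomial series of $(1-e^t)^\alpha$ on the Hankel loop. That series diverges where the loop crosses the positive real axis, while collapsing the loop onto the negative axis creates the non-integrable singularity $t^{-n-\alpha-1}$ at $0$.

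The formal Gr\"unwald--Letnikov fallback is not a proof. What can actually be proved is the integer case with the factor $(-1)^m$ shown above and, for non-integer $\alpha$, the residue identity.
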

\begin{proof}
Starting from \eqref{FourierFractional}, we obtain
\begin{equation}
\widehat{B_\alpha}(\omega)e^{-i\omega x}=
\left(\frac{1-e^{-i\omega}}{i\omega}\right)^\alpha e^{-i\omega x},\label{1}
\end{equation}
we expand the fractional binomial term
\[
(1 - e^{-i\omega})^\alpha=\sum_{k=0}^{\infty}
(-1)^k \binom{\alpha}{k} e^{-i\omega k}
\]
and combining the above equation with \eqref{1} and (\ref{DD1}), we obtain
\[
\widehat{B_\alpha}(\omega)e^{-i\omega x}=	(i\omega)^{-\alpha}
\sum_{k=0}^{\infty}
(-1)^k \binom{\alpha}{k} e^{-i\omega(x+k)}.
\]
Since
\[
e^{-i\omega(x+k)}=\sum_{m=0}^{\infty}
\frac{(-i\omega)^m (x+k)^m}{m!},
\]
we also get
\[
\widehat{B_\alpha}(\omega)e^{-i\omega x}=
(i\omega)^{-\alpha}\sum_{k=0}^{\infty}(-1)^k \binom{\alpha}{k}
\sum_{m=0}^{\infty}
\frac{(-i\omega)^m (x+k)^m}{m!}.
\]
Rewriting this as a power series in $\omega$ yields
	\[
	=
	\sum_{n=0}^{\infty}
	\left[
	\sum_{k=0}^{\infty}
	(-1)^k \binom{\alpha}{k}
	\frac{(x+k)^{n+\alpha}}{\Gamma(n+\alpha+1)}
	\right]
	(-i\omega)^n.
	\]
	
	On the other hand, from the fractional generating function \eqref{T}, we have
	\[
	\widehat{B_\alpha}(\omega)e^{-i\omega x}
	=
	\sum_{n=0}^{\infty}
	\frac{\Gamma(\alpha+1)}{\Gamma(n+\alpha+1)}
	S^{n+\alpha}_{\alpha}(x;1,e;1)
	(-i\omega)^n.
	\]
	
	Equating coefficients of $(-i\omega)^n$ gives
	\[
	\frac{\Gamma(\alpha+1)}{\Gamma(n+\alpha+1)}
	S^{n+\alpha}_{\alpha}(x;1,e;1)
	=
	\sum_{k=0}^{\infty}
	(-1)^k \binom{\alpha}{k}
	\frac{(x+k)^{n+\alpha}}{\Gamma(n+\alpha+1)},
	\]
	which completes the proof.
\end{proof}

\begin{theorem}
Let \textcolor{black}{$\alpha > 1$}. Then the following fractional Fourier-type generating function holds:
\begin{equation}
\widehat{B_\alpha}(\omega)=
\sum_{n=0}^{\infty}
\frac{(-1)^\alpha\Gamma(\alpha+1)S^{n+\alpha}_{\alpha}(x; 1,e;1)}{\Gamma(n+\alpha+1)}(-i\omega)^n. \label{T2}
\end{equation}
\end{theorem}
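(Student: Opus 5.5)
The plan is to derive \eqref{T2} directly from the generating function \eqref{stir}, following the proof of \eqref{T}, but organizing the base of the fractional power so that the factor $(-1)^\alpha$ appears explicitly. First I rewrite the symbol \eqref{FourierFractional} as
\[
\widehat{B_\alpha}(\omega)=\left(\frac{1-e^{-i\omega}}{i\omega}\right)^{\alpha}
=\left((-1)\cdot\frac{e^{-i\omega}-1}{i\omega}\right)^{\alpha}
=(-1)^\alpha\left(\frac{e^{-i\omega}-1}{i\omega}\right)^{\alpha}.
\]
The last equality requires fixing a branch: take principal powers, and restrict to $\omega$ near $0$, where the base $\frac{1-e^{-i\omega}}{i\omega}$ is close to $1$. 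Then $(-1)^\alpha:=e^{i\pi\alpha}$ is a fixed constant, consistent with the arguments of the two bases.

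Next I identify the remaining fractional power with the left-hand side of \eqref{stir}. I take $a=1$, $b=e$, $\lambda=1$, $k=\alpha$, and substitute $t\mapsto -i\omega$, as in the proof of \eqref{T}. Matching the base $\frac{e^{t}-1}{t}$ with $\frac{e^{-i\omega}-1}{i\omega}$ has to be done carefully, since the sign of the denominator is exactly what $(-1)^\alpha$ is absorbing. The coefficients are then read off with $\binom{n+\alpha}{n}^{-1}/n!=\Gamma(\alpha+1)/\Gamma(n+\alpha+1)$, exactly as in the passage from \eqref{11} to \eqref{T}.

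The main obstacle is the parameter $x$. The left-hand side of \eqref{T2} does not depend on $x$, while \eqref{stir} carries the factor $b^{xt}=e^{-i\omega x}$. I would handle this by specializing to $x=0$, where $e^{-i\omega x}=1$, and reading the coefficients $S^{n+\alpha}_\alpha(x;1,e;1)$ in \eqref{T2} at $x=0$. Alternatively, for general $x$, I would multiply \eqref{T} by $e^{i\omega x}=\sum_j (i\omega x)^j/j!$ and take the Cauchy product. That route, however, produces convolution sums in $n$ rather than the single-term coefficients of \eqref{T2}, so I expect the clean form to hold only at $x=0$. The step I would check hardest is the consistency of the branch of $(-1)^\alpha$ with the principal branch used in \eqref{stir}.

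As a cross-check, I would compare with the explicit formula from the preceding theorem at $x=0$, namely
\[
S^{n+\alpha}_{\alpha}(0;1,e;1)=\frac{1}{\Gamma(\alpha+1)}\sum_{k\ge 0}(-1)^k\binom{\alpha}{k}k^{n+\alpha},
\]
and test it against the known small-$\omega$ expansion $\widehat{B_\alpha}(\omega)=1-\frac{i\alpha\omega}{2}+O(\omega^2)$ to determine whether the factor $(-1)^\alpha$ is genuinely needed.
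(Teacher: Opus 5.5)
The step that fails is the matching in your second paragraph. With $a=1$, $b=e$, $\lambda=1$ and $t=-i\omega$, the base in \eqref{stir} becomes $\frac{e^{t}-1}{t}=\frac{e^{-i\omega}-1}{-i\omega}=\frac{1-e^{-i\omega}}{i\omega}=\widehat{B_1}(\omega)$. The substitution itself already absorbs the sign of the denominator, so there is nothing left for $(-1)^\alpha$ to absorb. The base you create in the first step, $w(\omega)=\frac{e^{-i\omega}-1}{i\omega}$, equals $-\frac{e^t-1}{t}\big|_{t=-i\omega}$. To feed it into \eqref{stir} you must therefore extract a second factor $(-1)^{\pm\alpha}$, and with consistent branches this cancels the first. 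The first step is also off on its own terms. $w(\omega)=-\frac{\sin(\omega/2)}{\omega/2}\,e^{-i\omega/2}$ lies next to $-1$, on the cut of the principal logarithm. With principal powers, $\widehat{B_\alpha}(\omega)/w(\omega)^\alpha$ equals $e^{-i\pi\alpha}$ for $\omega\ge 0$ but $e^{i\pi\alpha}$ for $\omega<0$, so it is not a fixed constant. Done correctly, your route proves exactly \eqref{T} at $x=0$,
\[
\widehat{B_\alpha}(\omega)=\sum_{n=0}^{\infty}\frac{\Gamma(\alpha+1)}{\Gamma(n+\alpha+1)}\,S^{n+\alpha}_{\alpha}(0;1,e;1)\,(-i\omega)^n,
\]
with no prefactor at all.

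No repair gets from there to \eqref{T2} as written, because \eqref{T2} contradicts this identity. The constant term of $\bigl(\frac{e^t-1}{t}\bigr)^\alpha e^{xt}$ is $1$, so $S^{\alpha}_{\alpha}(x;1,e;1)=1$ for every $x$. Hence the $n=0$ term of \eqref{T2} is $(-1)^\alpha$, whereas $\widehat{B_\alpha}(0)=1$; take $\alpha=3$. Likewise the $n=1$ term of \eqref{T2} is $-(-1)^\alpha\bigl(\frac{\alpha}{2}+x\bigr)i\omega$, against $-\frac{i\alpha}{2}\omega$. So both of your suspicions, about the $x$-dependence and about the sign, are justified, and your planned cross-check settles the matter against \eqref{T2}.

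Run that cross-check from the generating function, not from the explicit series of the preceding theorem. That series diverges for non-integer $\alpha$, since its terms behave like $k^{n-1}/\Gamma(-\alpha)$. For integer $\alpha=m$ it carries $(-1)^k$ where $(-1)^{m-k}$ belongs, which would make \eqref{T2} spuriously look right at $\omega=0$.

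For comparison, the paper's proof does not use $a=1$, $b=e$; it takes $a=e$, $b=1$, $\lambda=1$. That choice removes $b^{xt}$, which disposes of the $x$-issue, and it produces the sign from $\frac{1-e^{t}}{t}\big|_{t=-i\omega}=-\widehat{B_1}(\omega)$. But the resulting coefficients are $S^{n+\alpha}_{\alpha}(x;e,1;1)$, which are independent of $x$ and are not the $S^{n+\alpha}_{\alpha}(x;1,e;1)$ of the statement. Moreover, since $S^{\alpha}_{\alpha}(x;e,1;1)=(-1)^\alpha$, the correct prefactor is $(-1)^{-\alpha}$; for the principal branch this agrees with $(-1)^\alpha$ only when $\alpha\in\mathbb{Z}$. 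In that corrected form the identity is just your $x=0$ case of \eqref{T} again, via $S^{n+\alpha}_{\alpha}(x;e,1;1)=(-1)^\alpha S^{n+\alpha}_{\alpha}(0;1,e;1)$.
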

\begin{proof}
Setting $a=e$, $b=1$, and $\lambda=1$ in \eqref{stir}, we obtain
\[
\left(
\frac{1 - e^t}{t}
\right)^\alpha
=
\sum_{n=0}^{\infty}
\frac{S^{n+\alpha}_{\alpha}(x; e,1;1)}{\binom{n+\alpha}{\alpha}}
\frac{t^n}{n!}.
\]
Replacing $t$ by $-i\omega$, it follows that
\[
\left(
\frac{1 - e^{-i\omega}}{i\omega}
\right)^\alpha
=
\sum_{n=0}^{\infty}
\frac{(-1)^\alpha S^{n+\alpha}_{\alpha}(x; e,1;1)}{\binom{n+\alpha}{\alpha}}
\frac{(-i\omega)^n}{n!}.
\]
\end{proof}
\begin{theorem}
Let \textcolor{black}{$\alpha > 1$}. Then the \textcolor{black}{fractional} B-spline admits the following inverse Fourier-type distributional representation:
\begin{equation}
B_\alpha(-x)=\sum_{n=0}^{\infty}\frac{(-1)^\alpha \Gamma(\alpha+1)}{\Gamma(n+\alpha+1)}
S^{n+\alpha}_{\alpha}(x; 1,e;1)
(-1)^n \delta^{(n)}(x), \label{14}
\end{equation}
where the equality holds in the sense of distributions.
\end{theorem}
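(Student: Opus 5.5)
The plan is to derive \eqref{14} by inverting the power-series representation \eqref{T2} of $\widehat{B_\alpha}$ term by term, in the same way \eqref{in} was obtained from \eqref{13}. I would treat $\widehat{B_\alpha}$ through the identity \eqref{T2}, take the Fourier transform of both sides, and then convert the monomials $(-i\omega)^n$ into derivatives of the Dirac delta. Throughout, the coefficients
\[
e_n:=\frac{(-1)^\alpha\Gamma(\alpha+1)}{\Gamma(n+\alpha+1)}S^{n+\alpha}_{\alpha}(x;1,e;1)
\]
are regarded as the constants appearing in \eqref{T2}.

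\emph{First step.} Apply $\mathcal{F}$ once more to \eqref{T2}. By the reflection rule $\mathcal{F}^2 f(x)=2\pi f(-x)$ already used in the Corollary on the double Fourier transform, the left-hand side becomes $\mathcal{F}\{\widehat{B_\alpha}\}(x)=2\pi B_\alpha(-x)$. Alternatively, applying $\mathcal{F}^{-1}$ and then reflecting gives $B_\alpha(-x)$ directly, with no $2\pi$ factor. I would use the latter route, since it matches the normalization in \eqref{14}.

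\emph{Second step.} Compute the inverse transforms of the monomials. Using the identity quoted before the Corollary,
\[
\mathcal{F}^{-1}\{(-i\omega)^n\}(t)=(-1)^n\delta^{(n)}(t),
\]
each term $e_n(-i\omega)^n$ is sent to $e_n(-1)^n\delta^{(n)}$. Evaluating at the reflected variable and using $\delta^{(n)}(-x)=(-1)^n\delta^{(n)}(x)$, I would track the signs so that they combine to the single factor $(-1)^n$ displayed in \eqref{14}. Summing over $n$ then yields the stated series, exactly as in the derivation of \eqref{in} and of the Theorem giving $B_\alpha(t-x)$ as a delta expansion.

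\emph{Main obstacle.} The hard part is justifying the termwise inversion in the distributional sense. The series $\sum e_n(-i\omega)^n$ is an entire function of $\omega$ only formally, and it is not a tempered distribution summed termwise. Meanwhile, $\sum e_n\delta^{(n)}$ converges only when paired with test functions whose Taylor series converge, not on all of $\mathcal{S}$.

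My first attempt would be to interpret the equality weakly. For a test function $\varphi$ whose Fourier transform has compact support (for example, a Paley--Wiener type class dense in the relevant space), I would pair both sides, interchange the sum with the integral using the growth bound on $e_n$ coming from \eqref{stir}, and identify $\langle\delta^{(n)},\varphi\rangle=(-1)^n\varphi^{(n)}(0)$ with the moments of $\widehat\varphi$. Density would then extend the identity. The same hypothesis-style convention adopted in the preceding corollaries, which assume the delta expansion exists, would be invoked where this convergence cannot be fully controlled.
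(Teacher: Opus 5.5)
\emph{There is a genuine gap: the sign step fails, and the distributional identity you aim for cannot be proved because it is false in $\mathcal{D}'$ and $\mathcal{S}'$.}

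\textbf{The sign step.} Your route is the paper's own argument in different notation. The paper multiplies \eqref{T2} by $e^{-i\omega x}$, integrates in $\omega$ and uses \eqref{3} and \eqref{4}; you apply $\mathcal{F}^{-1}$ and reflect. With the identities you quote, $\mathcal{F}^{-1}$ turns \eqref{T2} into $B_\alpha(t)=\sum_n e_n(-1)^n\delta^{(n)}(t)$. The reflection $\delta^{(n)}(-x)=(-1)^n\delta^{(n)}(x)$ then contributes a second factor $(-1)^n$, so you arrive at $B_\alpha(-x)=\sum_n e_n\delta^{(n)}(x)$. No tracking of signs produces the $(-1)^n$ of \eqref{14}. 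The paper obtains that factor only because \eqref{4} is itself off by $(-1)^n$: differentiating $\int_{\mathbb{R}}e^{-i\omega x}\,d\omega=2\pi\delta(x)$ $n$ times in $x$ gives $\int_{\mathbb{R}}(-i\omega)^n e^{-i\omega x}\,d\omega=2\pi\delta^{(n)}(x)$.

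This is not cosmetic. Since $\mathcal{F}\{\delta^{(n)}\}(\omega)=(i\omega)^n$, the series $\sum_n e_n(-1)^n\delta^{(n)}$ transforms formally into $\sum_n e_n(-i\omega)^n=\widehat{B_\alpha}(\omega)$. So the right-hand side of \eqref{14} is the formal expansion of $B_\alpha(x)$, not of $B_\alpha(-x)$. And $\widehat{B_\alpha}(\omega)\neq\widehat{B_\alpha}(-\omega)$ already at first order, where the coefficient is $e_1=\alpha/2$.

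\textbf{The input.} \eqref{T2} is unusable as printed. Its left side does not depend on $x$, and together with \eqref{T} it would force $e^{-i\omega x}=(-1)^{-\alpha}$. The constant-coefficient expansion to invert is \eqref{T} at $x=0$, i.e.\ $e_n=\frac{\Gamma(\alpha+1)}{\Gamma(n+\alpha+1)}S^{n+\alpha}_{\alpha}(0;1,e;1)$, with no factor $(-1)^\alpha$. Calling the $e_n$ constants while $x$ is also the variable of $\delta^{(n)}(x)$ in \eqref{14} conceals this.

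\textbf{The justification.} The obstacle you flag is not a technicality that density can remove, because \eqref{14} is false in $\mathcal{D}'(\mathbb{R})$, hence in $\mathcal{S}'(\mathbb{R})$, whatever the coefficients.
\begin{itemize}
\item Every partial sum of $\sum_n a_n(x)\delta^{(n)}(x)$ annihilates test functions vanishing near $0$. But for $-1<x<0$ only the $k=0$ term of \eqref{TimeFractional} survives, so $B_\alpha(-x)=(-x)^{\alpha-1}/\Gamma(\alpha)>0$ there. Any nonzero $\varphi\geq 0$ in $C_c^\infty((-1,0))$ therefore separates the two sides.
\item By Borel's lemma the values $\varphi^{(n)}(0)$ are arbitrary. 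Hence a series $\sum_n c_n\delta^{(n)}$ with infinitely many $c_n\neq 0$ is not a distribution at all, and there is nothing on that side to extend by continuity.
\item On the Fourier side, for $\alpha\notin\mathbb{N}$ the function $\widehat{B_\alpha}$ has branch points at the zeros $\omega\in 2\pi\mathbb{Z}\setminus\{0\}$ of $1-e^{-i\omega}$. So $\sum_n e_n(-i\omega)^n$ has radius of convergence exactly $2\pi$.
\item Consequently your interchange of sum and integral is valid only when $\supp\widehat{\varphi}\subset(-2\pi,2\pi)$. This class is not dense in $\mathcal{S}(\mathbb{R})$, and for wider Fourier support $\sum_n e_n(-1)^n\varphi^{(n)}(0)$ need not converge.
\end{itemize}

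What your pairing argument genuinely yields is the restricted identity
\[
\int_{\mathbb{R}}B_\alpha(-x)\varphi(x)\,dx=\sum_{n=0}^{\infty}e_n\,\langle\delta^{(n)},\varphi\rangle,\qquad \supp\widehat{\varphi}\subset(-2\pi,2\pi),
\]
with the corrected $e_n$ and without the factor $(-1)^n$. Falling back on the corollaries' convention of assuming a delta expansion exists would mean assuming a conclusion that is false.
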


\begin{proof}
Starting from the fractional Fourier-type generating representation, multiplying both sides of \eqref{T2} by $e^{-i\omega x}$, we obtain
\[
\widehat{B_\alpha}(\omega)e^{-i\omega x}=	\sum_{n=0}^{\infty}
\frac{(-1)^\alpha \Gamma(\alpha+1)}{\Gamma(n+\alpha+1)}
S^{n+\alpha}_{\alpha}(x; 1,e;1)(-i\omega)^n e^{-i\omega x},
\]
we integrate both sides over $\mathbb{R}$ with respect to $\omega$:
\begin{equation}
\int_{\mathbb{R}} \widehat{B_\alpha}(\omega)e^{-i\omega x} d\omega=	\int_{\mathbb{R}} \sum_{n=0}^{\infty}
\frac{(-1)^\alpha \Gamma(\alpha+1)}{\Gamma(n+\alpha+1)}
S^{n+\alpha}_{\alpha}(x; 1,e;1)(-i\omega)^n e^{-i\omega x} d\omega. \label{2}
\end{equation}
The left-hand side gives
\begin{equation}
\int_{\mathbb{R}} \widehat{B_\alpha}(\omega)e^{-i\omega x} d\omega= 2\pi B_\alpha(-x). \label{3}
\end{equation}
For the right-hand side, using the distributional identity
\begin{equation}
\int_{\mathbb{R}} (-i\omega)^n e^{-i\omega x} d\omega
= 2\pi (-1)^n \delta^{(n)}(x), \label{4}
\end{equation}
By combining \eqref{3} and \eqref{4} and substituting the result into \eqref{2}, we obtain
\[
2\pi B_\alpha(-x)=2\pi \sum_{n=0}^{\infty}
\frac{(-1)^\alpha \Gamma(\alpha+1)}{\Gamma(n+\alpha+1)}
S^{n+\alpha}_{\alpha}(x; 1,e;1)
(-1)^n \delta^{(n)}(x).
\]
Dividing both sides by $2\pi$ completes the proof.
\end{proof}
\begin{remark}
The above identity shows that the generalized Stirling-type polynomials 
act as coefficients in a distributional expansion of the \textcolor{black}{fractional} B-spline 
in terms of derivatives of the Dirac delta.
\end{remark}
\begin{theorem}	Let \textcolor{black}{$\alpha > 1$} and assume that the \textcolor{black}{fractional} B-spline admits the	distributional representation
\[
B_\alpha(-x)=\sum_{n=0}^{\infty} a_n(x)\,(-1)^n\delta^{(n)}(x),
\]
where $a_n \in C^\infty(\mathbb{R})$. Then, for every $\varphi \in \cS(\mathbb{R})$, the following identity holds:
\[
\int_{\mathbb{R}} B_\alpha(-x)\varphi(x)dx=
\sum_{n=0}^{\infty} a_n(0)\varphi^{(n)}(0).
\]
In particular, if
\[
a_n(x)=\frac{(-1)^\alpha \Gamma(\alpha+1)}{\Gamma(n+\alpha+1)}
S^{n+\alpha}_{\alpha}(x;1,e;1),
\]
then
\[
\int_{\mathbb{R}} B_\alpha(-x)\varphi(x)dx=
\sum_{n=0}^{\infty}	\frac{(-1)^\alpha \Gamma(\alpha+1)}{\Gamma(n+\alpha+1)}
S^{n+\alpha}_{\alpha}(0;1,e;1)
\varphi^{(n)}(0).
\]
\end{theorem}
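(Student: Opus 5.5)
The plan is to pair both sides of the assumed representation with a test function $\varphi\in\cS(\R)$ and evaluate the right-hand side term by term. The integral $\int_\R B_\alpha(-x)\varphi(x)\,dx$ is, by definition, the action of the regular distribution $B_\alpha(-\cdot)$ on $\varphi$. This is legitimate because $B_\alpha\in L^1(\R)\cap L^2(\R)$ and has polynomial decay, so it defines a tempered distribution. Hence the left-hand side equals $\langle B_\alpha(-x),\varphi\rangle$, and by the hypothesis this equals $\bigl\langle \sum_{n} a_n(x)(-1)^n\delta^{(n)}(x),\varphi\bigr\rangle$. I will interpret the series as converging in $\cS'(\R)$, so that pairing commutes with the sum:
\[
\int_{\R} B_\alpha(-x)\varphi(x)\,dx=\sum_{n=0}^\infty (-1)^n\langle a_n\,\delta^{(n)},\varphi\rangle .
\]

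The core step is to compute each term. Since $a_n\in C^\infty(\R)$, the product $a_n\delta^{(n)}$ is defined by $\langle a_n\delta^{(n)},\varphi\rangle=\langle\delta^{(n)},a_n\varphi\rangle=(-1)^n(a_n\varphi)^{(n)}(0)$, using the rule $\langle \delta^{(n)},\varphi\rangle=(-1)^n\varphi^{(n)}(0)$ recalled earlier. The signs cancel, so the $n$-th term is $(a_n\varphi)^{(n)}(0)$. The Leibniz rule gives
\[
(a_n\varphi)^{(n)}(0)=a_n(0)\varphi^{(n)}(0)+\sum_{j=1}^{n}\binom{n}{j}a_n^{(j)}(0)\varphi^{(n-j)}(0).
\]

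The main obstacle is the Leibniz remainder: the claimed identity keeps only the term $a_n(0)\varphi^{(n)}(0)$. My first attempt is to follow the convention implicit in the statement and in the preceding theorem, where the coefficient in front of $\delta^{(n)}(x)$ is understood as a parameter frozen at the support point $x=0$. That is, $a_n(x)\delta^{(n)}(x)$ is read as $a_n(0)\delta^{(n)}(x)$, the shift variable having been absorbed into the coefficients as in the remark after the shifted-representation theorem. Under that reading the remainder is absent and the sum collapses to $\sum_n a_n(0)\varphi^{(n)}(0)$. If a genuine product with a smooth function is intended, I would instead regroup: collect the terms $a_n^{(j)}(0)\varphi^{(n-j)}(0)$ by the order $m=n-j$ of the derivative of $\varphi$. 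This yields a modified coefficient $\tilde a_m(0)=\sum_{n\ge m}\binom{n}{n-m}a_n^{(n-m)}(0)$. I would then remark that the stated formula holds with $a_n(0)$ interpreted as these effective coefficients, which coincide with $a_n(0)$ in the frozen-parameter convention.

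Finally, the particular case follows by substituting the given $a_n(x)$, which is smooth in $x$ because $S^{n+\alpha}_\alpha(x;1,e;1)$ is given by the explicit series $\frac{1}{\Gamma(\alpha+1)}\sum_k(-1)^k\binom{\alpha}{k}(x+k)^{n+\alpha}$ proved above. Evaluating at $x=0$ then gives the displayed formula with $S^{n+\alpha}_\alpha(0;1,e;1)$.
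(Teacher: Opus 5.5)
Your outline is the paper's own argument: pair with $\varphi$, interchange sum and pairing, evaluate each term. The point where you discard the Leibniz remainder by a ``frozen-parameter convention'' is exactly where the paper invokes ``$f\delta=f(0)\delta$ and its extension to derivatives.'' That extension is false, as your own computation shows: for $a\in C^\infty(\R)$,
\[
a\,\delta^{(n)}=\sum_{j=0}^{n}(-1)^j\binom{n}{j}a^{(j)}(0)\,\delta^{(n-j)},\qquad\text{e.g. } x\,\delta'(x)=-\delta(x)\neq 0 .
\]
The statement takes $a_n\in C^\infty(\R)$ as functions of the same variable as $\delta^{(n)}(x)$ and evaluates them at $0$, so the product is the standard one. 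Decreeing that $a_n(x)\delta^{(n)}(x)$ means $a_n(0)\delta^{(n)}(x)$ therefore alters the hypothesis rather than proving the conclusion. (If $x$ really were an external shift parameter, the natural coefficient would be $a_n$ at that parameter, not $a_n(0)$.)

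What the hypothesis actually yields is $\sum_n(a_n\varphi)^{(n)}(0)$. This agrees with the claim only if $a_n^{(j)}(0)=0$ for $1\le j\le n$, e.g. for constant $a_n$. Your $\tilde a_m(0)$ are the right coefficients, once the rearrangement is justified by absolute convergence, but they are not $a_m(0)$. The particular case does not escape this, since $S^{n+\alpha}_\alpha(x;1,e;1)$ depends on $x$. Smoothness there is part of the hypothesis, so you need not prove it. The series $\sum_k(-1)^k\binom{\alpha}{k}(x+k)^{n+\alpha}$ could not do so anyway. For $\alpha\notin\N$ one has $(-1)^k\binom{\alpha}{k}=\Gamma(k-\alpha)/(\Gamma(-\alpha)\,k!)$, of eventually constant sign and size about $k^{-\alpha-1}$. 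So the terms behave like $k^{n-1}$ and the series diverges, and its $k=0$ term $x^{n+\alpha}$ is not smooth at $x=0$.

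Note finally that under your own reading (convergence in $\cS'(\R)$) the hypothesis can never be met. Every partial sum is supported in $\{0\}$, hence so is the limit. But $B_\alpha(-\cdot)$ is an $L^1$ function vanishing on $(0,\infty)$ with integral $\widehat{B_\alpha}(0)=1$. So some $\varphi\in C_c^\infty((-\infty,0))$ pairs nontrivially with the left-hand side while annihilating every $a_n\delta^{(n)}$. The theorem is therefore true only vacuously, and the one rigorous argument is to exhibit this contradiction. Any correct term-by-term evaluation gives $\sum_n(a_n\varphi)^{(n)}(0)$, not the stated sum.
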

\begin{proof}
We apply in \eqref{14} both sides to an arbitrary test function $\varphi \in \cS(\mathbb{R})$. This yields
\[
\int_{\mathbb{R}} B_\alpha(-x)\varphi(x)dx=
\left\langle\sum_{n=0}^{\infty} a_n(x)(-1)^n \delta^{(n)}(x),\,\varphi(x)\right\rangle.
\]
By linearity of distributions, we can interchange the summation and the pairing:
\begin{equation}
\int_{\mathbb{R}} B_\alpha(-x)\varphi(x)dx=\sum_{n=0}^{\infty}\left \langle a_n(x)(-1)^n \delta^{(n)}(x),\varphi(x) \right\rangle. \label{15}
\end{equation}
Using the property $f(x)\delta(x)=f(0)\delta(x)$ (and its extension to derivatives), we obtain
\[
a_n(x)\delta^{(n)}(x)=a_n(0)\delta^{(n)}(x),
\]
Substituting the above equation into \eqref{15}, we obtain
\[
=
\sum_{n=0}^{\infty}a_n(0)(-1)^n
\langle \delta^{(n)}(x),\varphi(x)\rangle.
\]
We now apply the well-known identity for the action of the $n$-th derivative of the Dirac distribution:
\[
\langle \delta^{(n)} ,\varphi \rangle = (-1)^n \varphi^{(n)}(0),
\]
Substituting the above equation into \eqref{15}, we obtain
\[
\sum_{n=0}^{\infty}
a_n(0)(-1)^n(-1)^n\varphi^{(n)}(0)=
\sum_{n=0}^{\infty}a_n(0)\varphi^{(n)}(0),
\]
which completes the proof.
\end{proof}
\begin{remark}
This identity shows that the action of the \textcolor{black}{fractional} B-spline on test functions
is completely determined by the derivatives of the test function at the origin,
i.e., the coefficients $a_n(0)$ encode the local distributional structure of
$B_\alpha$.
\end{remark}

\begin{definition}
For \textcolor{black}{$\alpha > 0$}, define the fractional spline polynomials
\begin{equation}
S_n^{(\alpha)}(x)=\frac{1}{\Gamma(\alpha+n+1)}
\sum_{k=0}^{\infty}	(-1)^k	\binom{\alpha+1}{k}
(x-k)_+^{\alpha+n},
\qquad n\ge0. \label{def}
\end{equation}
\end{definition}
These functions extend the fractional B-spline basis to a polynomial-type family.
\begin{figure}[h!]
    \centering
    \includegraphics[width=4cm,height= 3cm]{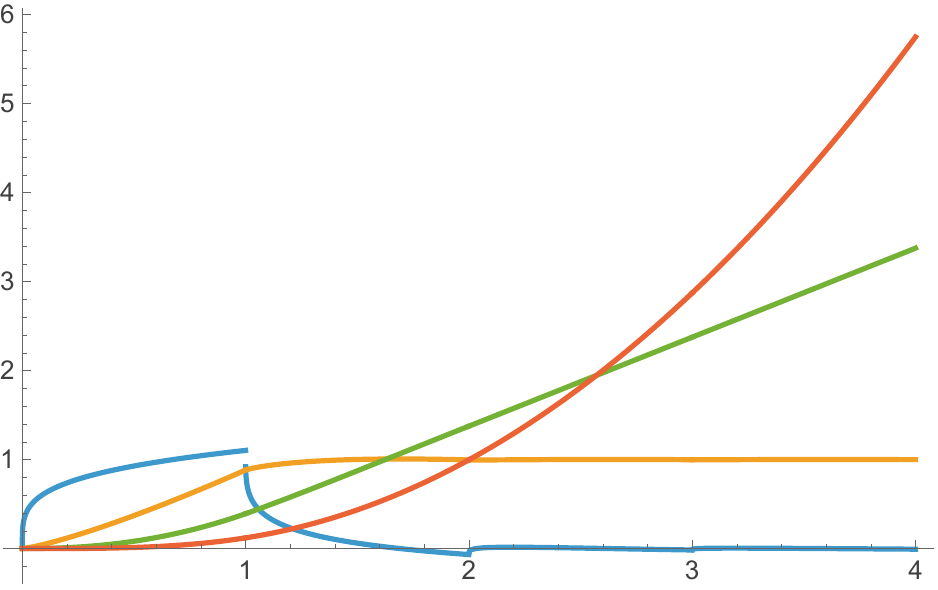}\qquad\includegraphics[width=4cm,height= 3cm]{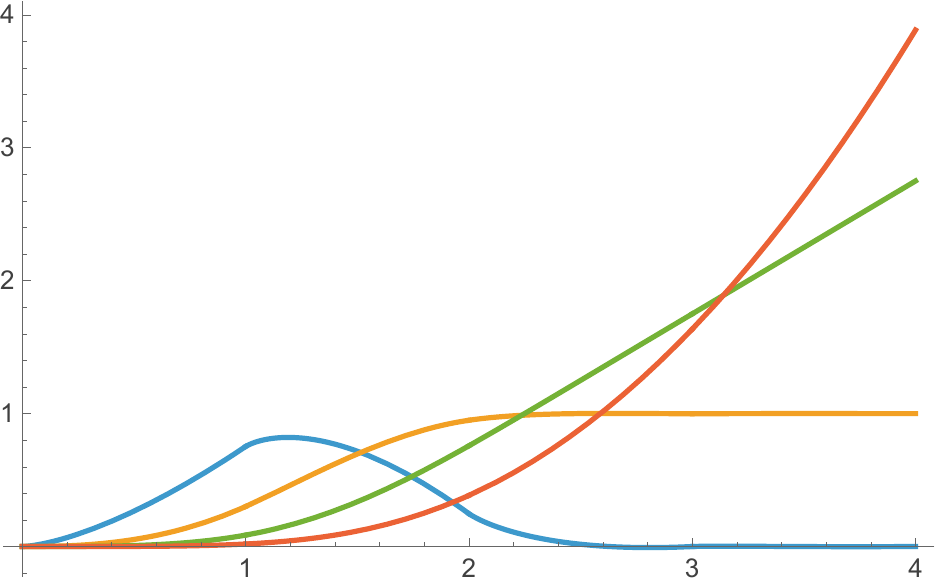}\qquad\includegraphics[width=4cm,height= 3cm]{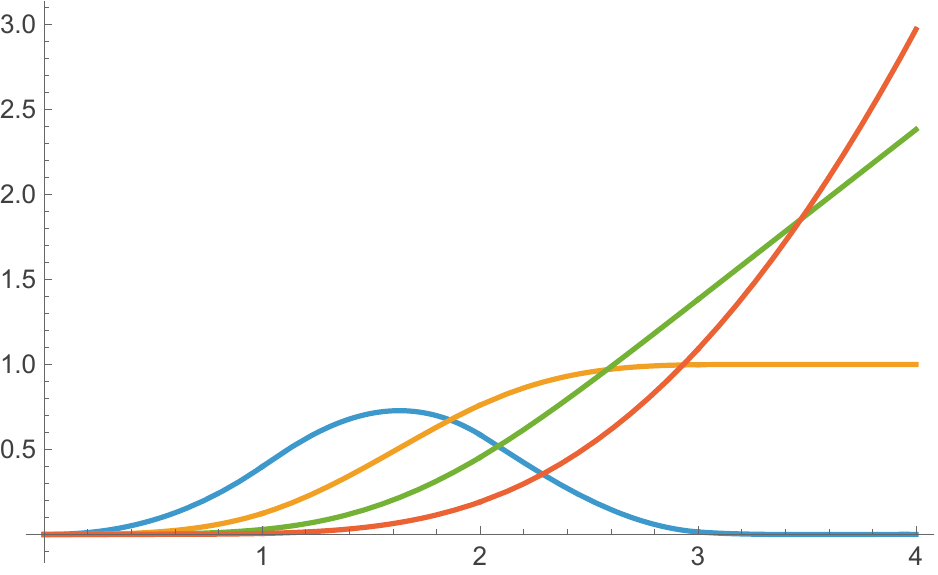}
    \caption{The first four fractional spline polynomials for $\alpha\in\{\frac14,1.5,\sqrt{5}\}$.}
    \label{Fig3}
\end{figure}
\begin{theorem} Let \textcolor{black}{$\alpha > 0$}. The fractional spline polynomials satisfy the ordinary generating function \[ 
\sum_{n=0}^{\infty} S_n^{(\alpha)}(x) t^n = \sum_{k=0}^{\infty} (-1)^k \binom{\alpha+1}{k} (x-k)_+^\alpha E_{1,\alpha+1}\!\left((x-k)_+ t\right), 
\] 
where $E_{a,b}(z)$ denotes the Mittag--Leffler function
\[ E_{a,b}(z)=\sum_{n=0}^{\infty}\frac{z^n}{\Gamma(an+b)}. 
\]
\end{theorem}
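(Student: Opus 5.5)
The plan is to substitute the definition \eqref{def} into the left-hand side, interchange the sums over $n$ and $k$, and recognize the inner series in $n$ as a Mittag--Leffler function. Fix $x\in\R$ and $t\in\mathbb{C}$. Because of the truncated powers, only the indices $k$ with $k<x$ contribute. Hence for fixed $x$ the sum over $k$ in \eqref{def} is finite, with $k$ ranging over $0\le k<x$, and all terms with $k\ge x$ vanish identically in $n$. This observation makes the interchange of summations harmless.

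Writing it out,
\[
\sum_{n=0}^{\infty} S_n^{(\alpha)}(x)t^n=\sum_{n=0}^{\infty}\sum_{0\le k<x}(-1)^k\binom{\alpha+1}{k}\frac{(x-k)_+^{\alpha+n}}{\Gamma(\alpha+n+1)}t^n .
\]
Since the $k$-sum is finite, I may exchange it with the $n$-sum, provided each resulting series in $n$ converges. For each fixed $k$, I factor $(x-k)_+^{\alpha+n}=(x-k)_+^{\alpha}\big((x-k)_+\big)^n$. This step is valid because $x-k>0$ for every surviving $k$; when $x-k\le 0$ both sides are $0$, using $\alpha>0$ so that $0^\alpha=0$. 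The inner series then becomes
\[
(x-k)_+^{\alpha}\sum_{n=0}^{\infty}\frac{\big((x-k)_+t\big)^n}{\Gamma(n+\alpha+1)}=(x-k)_+^{\alpha}E_{1,\alpha+1}\big((x-k)_+t\big),
\]
which is exactly the definition of $E_{a,b}$ with $a=1$ and $b=\alpha+1$.

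For convergence, $E_{1,\alpha+1}$ is entire: its coefficients satisfy $1/\Gamma(n+\alpha+1)\le C/n!$ for large $n$, so the series converges absolutely for every $t$. The interchange of the finite $k$-sum with the absolutely convergent $n$-sums is therefore justified. Summing over $k$ and restoring the vanishing terms with $k\ge x$ gives the stated identity.

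The only delicate point is this justification of the exchange of the two infinite sums, since \eqref{def} is written with $k$ running to infinity. I handle it by the finiteness-in-$k$ observation above. If one instead wanted a version uniform in $x$ on compacts, the same argument works, because only boundedly many $k$ are active.
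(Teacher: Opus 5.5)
Your proof is correct and follows essentially the same route as the paper: substitute the definition, interchange the $n$- and $k$-sums, and recognize the inner series as $E_{1,\alpha+1}\big((x-k)_+t\big)$. You also supply the justification for the interchange that the paper leaves implicit. For fixed $x$ only the finitely many indices $k<x$ contribute, and $E_{1,\alpha+1}$ is entire because $1/\Gamma(n+\alpha+1)\le 1/n!$ for $n\ge 1$.
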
 
\begin{proof}
Starting from \eqref{def}, we multiply both sides by $t^n$ and sum over $n \ge 0$, which yields the exponential generating function
\[
\sum_{n=0}^{\infty} S_n^{(\alpha)}(x)t^n = \sum_{n=0}^{\infty}t^n \frac{1}{\Gamma(\alpha+n+1)} \sum_{k=0}^{\infty} (-1)^k \binom{\alpha+1}{k} (x-k)_+^{\alpha+n}.
\] 
Changing the order of summation yields 
\[\sum_{n=0}^{\infty} S_n^{(\alpha)}(x)t^n = \sum_{k=0}^{\infty} (-1)^k \binom{\alpha+1}{k} (x-k)_+^\alpha \sum_{n=0}^{\infty} \frac{(x-k)_+^n t^n}{\Gamma(\alpha+n+1)}. 
\] 
The inner series corresponds to the Mittag--Leffler function, hence 
\[ 
\sum_{n=0}^{\infty} \frac{(x-k)_+^n t^n}{\Gamma(\alpha+n+1)} = E_{1,\alpha+1}((x-k)_+t). 
\] 
Substituting this into the previous expression completes the proof. 
\end{proof}

\end{document}